\documentclass[10pt]{article}
\usepackage{hyperref}
\usepackage{amsmath}
\usepackage[dvips]{graphicx}
\usepackage{amsthm}
\usepackage{epsfig}
\usepackage{amssymb}
\usepackage{dsfont}

\newtheorem{theorem}{Theorem}[section]

\newtheorem{corollary}[theorem]{Corollary}

\begin{document}

\begin{center}
{\Large  Qualitative properties of positive solutions of quasilinear
equations with Hardy terms}
\end{center}
\vskip 5mm

\begin{center}
{\sc Yutian Lei} \\
\vskip2mm
Institute of Mathematics\\
School of Mathematical Sciences\\
Nanjing Normal University\\
Nanjing, 210023, China \\
Email:leiyutian@njnu.edu.cn
\end{center}

\vskip 5mm {\leftskip5mm\rightskip5mm \normalsize
\noindent{\bf{Abstract}} In this paper, we are concerned with the
quasilinear PDE with weight
$$
-div A(x,\nabla u)=|x|^a u^q(x), \quad u>0 \quad \textrm{in} \quad
R^n,
$$
where $n \geq 3$, $q>p-1$ with $p \in (1,2]$ and $a \in (-n,0]$. The positive weak solution
$u$ of the quasilinear PDE is $\mathcal{A}$-superharmonic
and satisfies $\inf_{R^n}u=0$. We can introduce an
integral equation involving the wolff potential
$$
u(x)=R(x) W_{\beta,p}(|y|^au^q(y))(x), \quad u>0 \quad \textrm{in}
\quad R^n,
$$
which the positive solution $u$ of the quasilinear PDE satisfies.
Here $p \in (1,2]$, $q>p-1$, $\beta>0$ and $0 \leq -a<p\beta<n$. When $0<q
\leq \frac{(n+a)(p-1)}{n-p\beta}$, there does not exist any positive
solution to this integral equation. When $q>\frac{(n+a)(p-1)}{n-p\beta}$, the positive
solution $u$ of the integral equation is bounded and decays
with the fast rate $\frac{n-p\beta}{p-1}$ if and only if it is integrable
(i.e. it belongs to $L^{\frac{n(q-p+1)}{p\beta+a}}(R^n)$).
On the other hand, if the bounded solution
is not integrable and decays with some rate, then the rate must be
the slow one $\frac{p\beta+a}{q-p+1}$. Thus, all
the properties above are still true for the quasilinear PDE.
Finally, several qualitative properties for this PDE are discussed.
\par
\noindent{\bf{Keywords}}: Integral equations involving
Wolff potential, decay rate, quasilinear equation,
Hardy-Sobolev inequality, $\mathcal{A}$-superharmonic function
\par
{\bf{MSC}} 35B40, 35J62, 45E10, 45G05}

\newtheorem{proposition}[theorem]{Proposition}

\renewcommand{\theequation}{\thesection.\arabic{equation}}
\catcode`@=11
\@addtoreset{equation}{section}
\catcode`@=12

\section{Introduction }

In this paper, we are concerned with positive solutions of the
following quasilinear equation with a Hardy term
\begin{equation} \label{1}
    -div A(x,\nabla u)=|x|^a u^q(x), \quad u>0 ~ in ~ R^n,
\end{equation}
Here $n \geq 3$, $q>p-1$ with $p \in (1,2]$, $-a \in [0,n)$ and
$A:R^n \times R^n \to R^n$ is a vector valued mapping
satisfying:

1. the mapping $x\to A(x,\xi)$ is measurable for all $\xi \in R^n$;

2. the mapping $\xi\to A(x,\xi)$ is continuous for a.e. $x \in
R^n$.

In addition, there are constants $0 < \mu_1 \leq \mu_2 < \infty$
such that for a.e. $x \in R^n$, and for all $\xi \in R^n$,
\begin{equation} \label{2}
\left \{
    \begin{array}{ll}
    &A(x,\xi) \cdot \xi \geq \mu_1|\xi|^p, \quad A(x,\xi) \leq \mu_2|\xi|^{p-1}; \\
    &[A(x,\xi_1)-A(x,\xi_2)] \cdot (\xi_1-\xi_2) > 0, \quad if ~ \xi_1 \neq \xi_2; \\
    &A(x,\lambda\xi)=\lambda|\lambda|^{p-2}A(x,\xi), \quad if ~ \lambda
\neq 0.
    \end{array}
    \right.
\end{equation}

A function $u \in W_{loc}^{1,p}(R^n) \cap C(R^n)$ is called the
{\it weak solution} of (\ref{1}), if
\begin{equation} \label{weaksolution}
\int_{R^n} A(x,\nabla u) \nabla \zeta dx =\int_{R^n}|x|^a u^q
\zeta dx, \quad \forall \zeta \in C_0^\infty(R^n).
\end{equation}

In the special case $A(x,\xi)=|\xi|^{p-2}\xi$, $div A(x,\nabla u)$
is the usual p-Laplacian defined by $div(|\nabla u|^{p-2}\nabla
u)$. Now, (\ref{1}) becomes
\begin{equation} \label{PDE}
-div(|\nabla u(x)|^{p-2}\nabla u(x))=|x|^a u^q(x), \quad u>0 \quad
in \quad R^n.
\end{equation}
This equation arose in many fields such as nonlinear functional analysis,
astrophysics and astronomy (cf. \cite{BP}, \cite{BW} and
\cite{CR}). In particular, it is essential in the study of the extremal function
of the Hardy-Sobolev type inequality \cite{BT}
\begin{equation} \label{HS}
\Lambda(\int_{R^n}|u|^{q+1}|x|^{a}dx)^{\frac{1}{q+1}} \leq
(\int_{R^n}|\nabla u|^pdx)^{1/p}, \quad
\forall u \in \mathcal{D}^{1,p}(R^n),
\end{equation}
where $0 \leq -a<p$, $1 \leq p<q+1=\frac{p(n+a)}{n-p}$, and
$\mathcal{D}^{1,p}(R^n)$ is the homogeneous Sobolev space. We call
$q=\frac{p(n+a)}{n-p}-1$ the {\it critical exponent}. Such an
inequality is the special case of the Cafarelli-Kohn-Nirenberg
inequality (cf. \cite{CKN}, \cite{CW} and \cite{Lin}). Applying the
symmetrization and the theories of ODE, one can obtain the sharp
constant $\Lambda$ (cf. \cite{L}, \cite{PV1} and \cite{SSW}). To
find the corresponding extremal function, we consider the
minimization problem
$$
\Lambda=\inf\{\|\nabla u\|_p^p;u \in \mathcal{D}^{1,p}(R^n),
\int_{R^N}|u|^{q+1}|x|^adx=1\}.
$$
Since this minimization problem is invariant under the scaling transformation,
the variational methods are difficult to be used. Badiale and Tarantello 
found the solution by the concentration compactness
principle (cf. \cite{BT}). To describe the shape of the extremal functions, we
investigate the Euler-Lagrange equation (\ref{PDE}). In \cite{MS}, it was proved
that all the extremal functions are cylindrically
symmetric.

When $p=2$, (\ref{PDE}) becomes
\begin{equation} \label{dhua}
-\Delta u=|x|^a u^q, \quad
u>0 \quad in \quad R^n.
\end{equation}
Phan and Souplet \cite{PS} studied the existence of the positive solution and
obtained the Liouville type results. Recently, \cite{Lei-JDE} used
an equivalent integral equation to obtain the decay rates of the
positive solutions when $|x| \to \infty$. If $a=0$, it is
associated with the study of the well known Lane-Emden equation
$$
-\Delta u=u^q, \quad u>0 \quad in \quad R^n,
$$
which has been well studied (cf. \cite{CGS}, \cite{CL1},
\cite{GNN} and \cite{Li}).

When $p \neq 2$, it is difficult to find an equivalent integral
equation. If a positive solution $u$ is a
$\mathcal{A}$-superharmonic function and satisfies $\inf_{R^n}
u=0$, then $u$ solves another integral equation involving the
Wolff potential
\begin{equation} \label{IE}
u(x)=R(x) W_{\beta,p}(|y|^au^q(y))(x)
\end{equation}
with $\beta=1$ (cf. \S4). Here $R(x)$ is double bounded. Namely,
there exists $C>0$ such that $\frac{1}{C} \leq R(x) \leq C$ for
all $x \in R^n$. The definition of the $\mathcal{A}$-superharmonic
function can be found in \cite{KM} and \cite{PV}.

The Wolff potential of a positive function $f \in L_{loc}^1(R^n)$
is defined as (cf. \cite{HW})
$$
W_{\beta,p}(f)(x) =\int_0^\infty[\frac{\int_{B_t(x)}
f(y)dy}{t^{n-p\beta}}] ^{\frac{1}{p-1}}\frac{dt}{t},
$$
where $p>1$, $\beta > 0$, $p\beta < n$, and $B_t(x)$ is a ball of
radius $t$ centered at $x$. This potential can help us to
understand many nonlinear problems (see \cite{COV}, \cite{KKT},
\cite{KM1}, \cite{KM}, \cite{La}, \cite{Maly}, \cite{Min} and
\cite{PV}).

When $a=0$ and $q$ is the critical exponent $\frac{np}{n-p}-1$,
Ma, Chen and Li \cite{ChLM} obtained the integrability, boundedness and
the Lipschitz continuity of positive solutions of
(\ref{IE}). Based on these results, paper
\cite{LeiLi} estimated the fast decay rate. This asymptotic result is
also true for the Wolff type integral system (cf.
\cite{SL}). Moreover, if $R(x) \equiv 1$, those positive solutions
are radially symmetric and decreasing about $x_0 \in R^n$ (cf.
\cite{ChenLi}).

Furthermore, if $\beta=\alpha/2$ and $p=2$, (\ref{IE})
(with $R(x) \equiv 1$) is reduced to
an integral equation involving the Riesz potential
$$
u(x)=\int_{R^n}\frac{|y|^au^q(y)dy}{|x-y|^{n-\alpha}},
\quad u>0 \quad in \quad R^n.
$$
Lu and Zhu \cite{LZ} obtained the radial symmetry and the
regularity of weak solutions. Moreover, if $\alpha=2$, this integral
equation is reduced to (\ref{dhua}). Mancini, Fabbri and Sandeep \cite{MFS} studied
the sharp constant and classified the extremal functions of
(\ref{HS}). If $a=0$, the integral equation above becomes
\begin{equation} \label{riesz-hls}
u(x)=\int_{R^n}\frac{u^q(y)dy}{|x-y|^{n-\alpha}}.
\end{equation}
This equation can be used to describe the extremal functions of the
Hardy-Littlewood-Sobolev inequality (cf. \cite{CLO1}, \cite{YLi}
and \cite{L}).
The radial symmetry of integrable solutions was proved by the method of moving
planes in integral forms. Using the regularity lifting lemma by the
contraction operators, Jin and Li \cite{JL} obtained the optimal
integrability. Afterwards, \cite{LLM-CV} presented the fast decay
rates.

In this paper, we consider that $q$ is not only the critical
exponent $\frac{p(n+a)}{n-p}-1$, but also the supercritical and
the subcritical cases. Let
\begin{equation} \label{ASSUM}
n \geq 3, \beta>0, p \in
(1,2], q>p-1, 0 \leq -a<p\beta<n.
\end{equation}
Write $p^*=\frac{np}{n-p\beta}$. When $\beta=1$, $p^*$ is the
Sobolev conjugate index, and the weak solution $u \in
\mathcal{D}^{1,p}(R^n)$ belongs to $L^{p^*}(R^n)$.

Introduce an important index
$$
s_0:=\frac{n(q-p+1)}{p\beta+a}.
$$
This index $s_0$ is closely related to the invariant of the equation and the
energy under the scaling transformation (see Theorem
\ref{prop6.1}). In addition, $s_0=p^*$ if and only if $q$ is the
critical exponent $q=\frac{p(n+a)}{n-p\beta}-1$. When $a=0$, this
critical exponent plays an important role in studying the
existence of positive solutions of (\ref{PDE}) (cf. Corollary II
in \cite{SZ}). When $q<\frac{p(n+a)}{n-p\beta}-1$, (\ref{PDE}) has
not any regular solution. When $q \geq \frac{p(n+a)}{n-p\beta}-1$,
(\ref{PDE}) has positive solutions. Papers \cite{Fr} and
\cite{KYY} estimated the decay rates of solutions. Moreover,
\cite{LeiLi} obtained the fast decay rates of the
$L^{p^*}(R^n)$-solutions of (\ref{IE}) when
$q=\frac{p(n+a)}{n-p\beta}-1$.

In section 2, we have a Liouville type theorem (see Theorem
\ref{th2.1}). In addition, we prove the following decay estimates
when $|x| \to \infty$ in sections 2 and 3.

\begin{theorem} \label{th1.2}
If $q \in (0,\frac{(n+a)(p-1)}{n-p\beta}]$, then (\ref{IE}) does
not have any positive solution for any double bounded function $R(x)$.
Assume $u$ is a positive
solution of (\ref{IE}) with (\ref{ASSUM}). Then
\begin{enumerate}
\item $u$ is bounded and decays with the fast rate
$\frac{n-p\beta}{p-1}$ if and only if $u \in L^{s_0}(R^n)$.

\item If a bounded solution $u \not\in L^{s_0}(R^n)$ decays with some rate, then the
rate must be the slow one $\frac{p\beta+a}{q-p+1}$.
\end{enumerate}
\end{theorem}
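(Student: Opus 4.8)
The plan is to control everything through sharp two-sided estimates of the Wolff potential in \eqref{IE}, organized around the two competing exponents: the \emph{fast} rate $\gamma_0:=\frac{n-p\beta}{p-1}$ and the \emph{slow} rate $\gamma^*:=\frac{p\beta+a}{q-p+1}$. The workhorse is a universal lower bound. Since $R$ is double bounded, for $|x|=r$ large one restricts the $t$-integral to $t\ge 2r$, where $B_t(x)\supset B_r(0)$, to get
$$
u(x)\ge \frac1C\Big(\int_{B_r(0)}|y|^a u^q\,dy\Big)^{\frac{1}{p-1}} r^{-\gamma_0}.
$$
Because $u>0$, the inner integral exceeds a positive constant, so every positive solution satisfies $u(x)\ge c\,r^{-\gamma_0}$ at infinity. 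Two elementary identities link the exponents to the thresholds in the statement: $\gamma_0 s_0>n\iff q>\frac{(n+a)(p-1)}{n-p\beta}$, and at the endpoint $q=\frac{(n+a)(p-1)}{n-p\beta}$ one has $\gamma_0=\gamma^*=\frac{n+a}{q}$, i.e.\ the fast rate, the slow rate and the integrability threshold all coincide.

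For the non-existence in the range $0<q\le\frac{(n+a)(p-1)}{n-p\beta}$, I would iterate the lower bound. Writing $G(r)=\int_{B_r(0)}|y|^a u^q\,dy$ and inserting a bound $u(x)\ge c\,r^{-\gamma}$ gives $G(r)\ge c\,r^{\,n+a-q\gamma}$ whenever $\gamma<\frac{n+a}{q}$; feeding this back through the universal lower bound upgrades the exponent to $T(\gamma):=\frac{q\gamma-(p\beta+a)}{p-1}$. The map $T$ is increasing with slope $\frac{q}{p-1}>1$ and unique fixed point $\gamma^*$. When $q<\frac{(n+a)(p-1)}{n-p\beta}$ one checks $\gamma_0<\min\{\gamma^*,\tfrac{n+a}{q}\}$, so starting at $\gamma_0$ yields a strictly decreasing sequence $\gamma_k\to-\infty$ that never leaves the admissible range; once $\gamma_k<0$ the solution grows polynomially at infinity, which forces $\int_{B_t(x)}|y|^a u^q\,dy\gtrsim t^{\,n+a+q|\gamma_k|}$ and hence $W_{\beta,p}(|y|^a u^q)(x)=\infty$, contradicting $u(x)=R(x)W_{\beta,p}(|y|^a u^q)(x)<\infty$. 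The borderline $q=\frac{(n+a)(p-1)}{n-p\beta}$, where $\gamma_0=\gamma^*$ is exactly the fixed point and $G(r)$ diverges only logarithmically, is the delicate case: the pointwise iteration merely accumulates higher and higher powers of $\log r$ without violating finiteness of the potential, so a separate argument (tracking the logarithmic mass growth through a global integral estimate) is needed. I expect this logarithmic endpoint to be the main obstacle of part~1.

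For part~2(1), note first that since a solution is assumed to exist, part~1 forces $q>\frac{(n+a)(p-1)}{n-p\beta}$, so $\gamma_0 s_0>n$. The direction ($u$ fast-decaying and bounded $\Rightarrow u\in L^{s_0}$) is then a direct computation: $\int_{|x|>1}|x|^{-\gamma_0 s_0}\,dx<\infty$ precisely because $\gamma_0 s_0>n$, and $u$ is bounded near the origin. For the converse, from $u\in L^{s_0}$ a regularity-lifting/bootstrap argument on the Wolff potential (splitting the $t$-integral at $t=1$, estimating the small-$t$ part by Hölder against $u\in L^{s_0}$ and the locally integrable weight $|y|^{a}$, and the large-$t$ part by the total mass) yields $u\in L^\infty$ together with the upper bound $u(x)\le C|x|^{-\gamma_0}$; this in turn makes $M:=\int_{R^n}|y|^a u^q\,dy$ finite, and then the universal lower bound gives $u(x)\ge c\,M^{\frac1{p-1}}|x|^{-\gamma_0}$, i.e.\ the matching fast rate. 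This regularity-lifting step parallels the method of \cite{ChLM} and is the main technical ingredient here.

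For part~2(2), suppose a bounded $u\notin L^{s_0}$ decays with some rate, say $u(x)\asymp|x|^{-\theta}$. Then $u\notin L^{s_0}$ forces $\theta\le\frac{n}{s_0}=\gamma^*$, while the universal lower bound gives $\theta\le\gamma_0$; moreover finiteness of the potential rules out $\theta\le\frac{p\beta+a}{q}$. Inserting the two-sided bound $c_1|x|^{-\theta}\le u\le c_2|x|^{-\theta}$ into \eqref{IE} and evaluating both the small-$t$ ($t\lesssim r$, where $|y|\sim r$) and large-$t$ ($t\gtrsim r$) contributions, one finds
$$
W_{\beta,p}(|y|^a u^q)(x)\asymp |x|^{-\frac{q\theta-(p\beta+a)}{p-1}}.
$$
Self-consistency with $u\asymp|x|^{-\theta}$ and the double boundedness of $R$ then equates the exponents, $\theta=\frac{q\theta-(p\beta+a)}{p-1}$, whose only solution is $\theta=\gamma^*$. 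Thus any admissible rate for a non-integrable bounded solution must be the slow rate $\frac{p\beta+a}{q-p+1}$, the fast rate being excluded by part~2(1). The care needed in matching the two ends of the $t$-integral near the origin (where $|y|^a$ is singular for $a<0$) is the only subtlety in this final step.
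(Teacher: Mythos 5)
Your plan reproduces the paper's architecture for most of the statement: the universal lower bound $u(x)\ge c|x|^{-\frac{n-p\beta}{p-1}}$, the iteration through the map $T(\gamma)=\frac{q\gamma-(p\beta+a)}{p-1}$ for the strictly subcritical nonexistence (Step 1 of Theorem \ref{th2.1}), and the rate identification in part 2 (your self-consistency computation is a legitimate one-step version of the two one-sided claims in Theorem \ref{th5.1}, since the hypothesis $\lim_{|x|\to\infty}u(x)|x|^\theta\in(0,\infty)$ gives a two-sided bound). But there are two genuine gaps, located exactly where the paper works hardest. The first is the endpoint $q=\frac{(n+a)(p-1)}{n-p\beta}$, which you explicitly leave open. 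The paper does not refine the pointwise iteration there; it switches to a global integral argument (Step 2 of Theorem \ref{th2.1}): for $x\in B_R$, exchanging the order of integration in the Wolff potential gives
$$
u(x)\ \ge\ cR^{-\frac{n-p\beta}{p-1}}\Bigl(\int_{B_{R/4}}|y|^au^q\,dy\Bigr)^{\frac{1}{p-1}},
$$
and raising this to the power $q$, multiplying by $|x|^a$ and integrating over $B_{R/4}$ produces $\int_{B_{R/4}}|x|^au^q\,dx\ge c\bigl(\int_{B_{R/4}}|y|^au^q\,dy\bigr)^{\frac{q}{p-1}}$ with $c$ independent of $R$, precisely because $q$ has the endpoint value; since $q>p-1$ this yields $\int_{R^n}|x|^au^q\,dx<\infty$, and then running the same estimate with the annulus $B_{R/4}\setminus B_{R/8}$ on the left-hand side and letting $R\to\infty$ forces the total mass to vanish, i.e.\ $u\equiv 0$. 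Without this (or some equivalent) argument, the nonexistence sentence of the theorem is unproved.

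The second, more serious gap is the implication $u\in L^{s_0}(R^n)\Rightarrow u(x)\le C|x|^{-\frac{n-p\beta}{p-1}}$. You assert that a bootstrap (split the $t$-integral at $t=1$, H\"older on the small-$t$ part, total mass on the large-$t$ part) yields both $u\in L^\infty$ and the fast-rate upper bound. It yields at most boundedness, and even that requires the lifted integrability of Theorem \ref{inter}, not just $u\in L^{s_0}$. It cannot yield the fast rate: for large $|x|$ the decisive piece is $\int_0^{|x|/2}$, where $|y|\simeq|x|$, and with only boundedness and finite mass this piece is bounded by $C\|u\|_\infty^{\frac{q}{p-1}}|x|^{\frac{p\beta+a}{p-1}}$, which does not decay at all; replacing $\|u\|_\infty$ by H\"older against $\|u^q\|_l$ improves this only to $C|x|^{\frac{p\beta+a-n/l}{p-1}}$, and matching the fast rate would require $n/l\ge n+a$ while convergence of the $t$-integral requires $n/l<p\beta$ --- incompatible whenever $n+a\ge p\beta$, in particular always when $a=0$. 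This is exactly why the paper proves Theorem \ref{rateth2} by a dichotomy on the auxiliary function $h(x)=u(x)|x|^{(n+a)/q}\psi_\rho(x)$: either $h$ is uniformly bounded, which gives the intermediate decay $u\le C|x|^{-\frac{n+a}{q}}$ and only then allows the Wolff potential to bootstrap to the fast rate, or $h$ blows up along a sequence of maximum points $x_\rho$, a case excluded by the local estimates around $x_\rho$ that use the vanishing at infinity (Theorem \ref{decayth}) and the improved integrability (Theorem \ref{inter}). Your proposal contains no substitute for this dichotomy, so the direction (integrable $\Rightarrow$ bounded with fast decay) of item 1 is unproved as written.
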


Consider the PDEs (\ref{1}) and (\ref{PDE}). We have the following
qualitative results which are proved in section 4.

\begin{theorem} \label{th1.1}
Let $u$ be a positive weak solution of (\ref{1}) with
(\ref{ASSUM}). Then
\begin{enumerate}
\item the results in Theorem \ref{th1.2} are still true. (Now,
$\beta=1$.) Furthermore, if $u \in L^{s_0}(R^n)$, then $u \in
\mathcal{D}^{1,p}(R^n)$.

\item Assume the weak solution $u \in L^{p^*}(R^n)$. Then $\nabla u
\in L^p(R^n)$ if and only if $|x|^au^{q+1} \in L^1(R^n)$.

\item Let $\lambda \neq 0$. The scaling function
$u_\lambda(x):=\lambda^\theta u(\lambda x)$ is still a weak solution
of (\ref{1}) if and only if $\theta$ is the slow rate
$\frac{p\beta+a}{q-p+1}$. Moreover,
$\|u_\lambda\|_\eta=\|u\|_\eta$ if and only if $\eta=s_0$.
\end{enumerate}
\end{theorem}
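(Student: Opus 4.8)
The plan is to prove Theorem \ref{th1.1}, which transfers the integral-equation results of Theorem \ref{th1.2} to the quasilinear PDEs and establishes the scaling/energy invariance tied to the critical index $s_0$. I would organize the proof into three parts matching the three assertions.

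\textbf{Part (1).} The key bridge is the integral representation \eqref{IE} with $\beta=1$, which (as announced in \S4 and justified via $\mathcal{A}$-superharmonicity and $\inf_{R^n}u=0$) holds for any positive weak solution $u$ of \eqref{1} with a double bounded $R(x)$. Granting that equivalence, assertions 1 and 2 of Theorem \ref{th1.2} apply verbatim, so the nonexistence for $q\in(0,\frac{(n+a)(p-1)}{n-p\beta}]$, the fast-rate $\frac{n-p\beta}{p-1}$ characterization via $u\in L^{s_0}(R^n)$, and the slow-rate $\frac{p\beta+a}{q-p+1}$ dichotomy all carry over with $\beta=1$. The only genuinely new claim here is that $u\in L^{s_0}(R^n)$ forces $u\in\mathcal{D}^{1,p}(R^n)$. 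First I would use Theorem \ref{th1.2}(1) to upgrade $u\in L^{s_0}$ to boundedness plus the fast decay $|u(x)|\leq C(1+|x|)^{-\frac{n-p}{p-1}}$ (with $p\beta=p$). Then I would test the weak formulation \eqref{weaksolution} against $\zeta=u\varphi_R^p$, where $\varphi_R$ is a standard cutoff equal to $1$ on $B_R$ and supported in $B_{2R}$, and use the ellipticity bound $A(x,\xi)\cdot\xi\geq\mu_1|\xi|^p$ together with $|A(x,\xi)|\leq\mu_2|\xi|^{p-1}$ and Young's inequality to absorb the gradient-of-cutoff term; sending $R\to\infty$ and checking that the boundary terms vanish because of the fast decay yields $\int_{R^n}|\nabla u|^p\,dx\leq C\int_{R^n}|x|^au^{q+1}\,dx<\infty$, where finiteness of the right side follows from combining the decay rate with $0\leq -a<p$. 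That gives $\nabla u\in L^p$, and since $u\in L^{p^*}$ with $p^*=\frac{np}{n-p}$ is the Sobolev conjugate when $\beta=1$, we conclude $u\in\mathcal{D}^{1,p}(R^n)$.

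\textbf{Part (2).} This is a clean Pohozaev/energy identity argument and is essentially the computation embedded in the cutoff test above, now stated as an equivalence. Testing \eqref{weaksolution} against $u\varphi_R^p$ and letting $R\to\infty$ shows that $\int_{R^n}A(x,\nabla u)\cdot\nabla u\,dx=\int_{R^n}|x|^au^{q+1}\,dx$ provided the cross term with $\nabla\varphi_R$ vanishes in the limit; by the two-sided bound $\mu_1|\nabla u|^p\leq A(x,\nabla u)\cdot\nabla u$ and $|A(x,\nabla u)|\leq\mu_2|\nabla u|^{p-1}$, finiteness of $\int|\nabla u|^p$ and finiteness of $\int|x|^au^{q+1}$ are equivalent, with the error term controlled by $R^{-p}\int_{B_{2R}\setminus B_R}|\nabla u|^{p-1}u\,dx\to0$ once either side is finite (via Hölder). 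I would present both implications by choosing the cutoff test function and carefully justifying that, under $u\in L^{p^*}(R^n)$, the remainder integrals are dominated and tend to zero.

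\textbf{Part (3).} This is a direct scaling computation. Setting $u_\lambda(x)=\lambda^\theta u(\lambda x)$, I would compute $\nabla u_\lambda(x)=\lambda^{\theta+1}(\nabla u)(\lambda x)$ and insert into \eqref{weaksolution}, using the homogeneity $A(x,\lambda'\xi)=\lambda'|\lambda'|^{p-2}A(x,\xi)$ from \eqref{2} (third line) together with a change of variables $y=\lambda x$; matching the powers of $\lambda$ on the two sides forces $\theta(q-p+1)=p\beta+a$ (with $\beta=1$), i.e. $\theta=\frac{p\beta+a}{q-p+1}$, the slow rate, giving the first equivalence. For the norm invariance, I would compute $\|u_\lambda\|_\eta^\eta=\lambda^{\theta\eta-n}\|u\|_\eta^\eta$ by the same substitution, so $\|u_\lambda\|_\eta=\|u\|_\eta$ for all $\lambda$ iff $\theta\eta=n$, i.e. $\eta=\frac{n}{\theta}=\frac{n(q-p+1)}{p\beta+a}=s_0$. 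The main obstacle throughout is \textbf{not} the algebra of Part (3) but the analytic justification in Parts (1)--(2) that the cutoff boundary/cross terms vanish in the limit; this rests on having the sharp fast-decay rate from Theorem \ref{th1.2} and on the structural bounds \eqref{2}, so I would take care to quantify the decay precisely enough that the integrals $\int_{B_{2R}\setminus B_R}|\nabla u|^{p}\,dx$ and $\int_{R^n}|x|^au^{q+1}\,dx$ are controlled before passing to the limit.
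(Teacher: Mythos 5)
Your proposal is correct and, for Parts (1) and (2), follows essentially the paper's own path: the bridge is Theorem \ref{equiva} (the representation (\ref{IE}) with $\beta=1$, obtained from $\mathcal{A}$-superharmonicity and Corollary 4.13 of \cite{KM}), after which Theorem \ref{th1.2} applies verbatim; and Part (2) is exactly the paper's Theorem \ref{prop6.2}, i.e. testing (\ref{weaksolution}) with $u\zeta_R^p$, splitting the cross term by the three-factor H\"older inequality with $\frac{p-1}{p}+\frac{1}{p^*}+\frac{1}{n}=1$, absorbing via Young, and using that $\int_D|\nabla\zeta_R|^n dx$ stays bounded as $R\to\infty$. (Two small slips there: the cutoff gradient scales like $R^{-1}$, not $R^{-p}$, and in the sufficiency direction the cross term cannot be shown to ``tend to zero'' before $\nabla u\in L^p$ is known --- it must be absorbed, which is the mechanism you correctly invoke.) For the implication $u\in L^{s_0}\Rightarrow u\in\mathcal{D}^{1,p}$, the paper routes through Theorems \ref{inter}, \ref{boundth} and \ref{prop6.3} (integrability interval plus boundedness give $u\in L^{p^*}$, then H\"older gives $|x|^au^{q+1}\in L^1$), while you use the fast decay rate to get the same two facts; both inputs come from Theorem \ref{th1.2}, so this is the same argument in different clothing. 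The one genuine divergence is Part (3): the paper proves the scaling invariance at the level of the integral equation (Theorem \ref{prop6.1}), where the rescaled factor $R(\lambda x)$ is accepted as ``a new double bounded function,'' and performs the direct PDE computation only for the $x$-independent $p$-Laplacian (\ref{PDE}); you instead scale the weak formulation (\ref{weaksolution}) directly using the homogeneity in (\ref{2}). Be careful here: after the change of variables $y=\lambda x$, your integrand contains $A(y/\lambda,\nabla u(y))$ rather than $A(y,\nabla u(y))$, so for genuinely $x$-dependent $A$ the function $u_\lambda$ solves the equation with the (still admissible) operator $A(\cdot/\lambda,\cdot)$, not with $A$ itself; your power-matching argument is literally valid only for (\ref{PDE}), and for general (\ref{1}) one needs the paper's interpretation, in which the mismatch is absorbed into the new double bounded function of (\ref{IE}). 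With that caveat stated, your route is sound and is in fact more direct for the $p$-Laplace case, while the paper's detour through the Wolff-potential equation is what makes the statement meaningful for arbitrary admissible $A$.
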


\begin{corollary} \label{th1.3}
If $u$ is a classical solution of
(\ref{PDE}) with (\ref{ASSUM}), then the following three items
are equivalent:
\begin{enumerate}
\item $u \in L^{s_0}(R^n)$;
\item $u$ is bounded and decays with the fast rate $\frac{n-p}{p-1}$ when $|x| \to \infty$;
\item $u \in \mathcal{D}^{1,p}(R^n)$.
\end{enumerate}
\end{corollary}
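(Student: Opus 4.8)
The plan is to prove Corollary \ref{th1.3} by reducing the PDE (\ref{PDE}) to the integral equation (\ref{IE}) and then invoking Theorem \ref{th1.2} and the already-established part (1) of Theorem \ref{th1.1}. Since (\ref{PDE}) is the special case $A(x,\xi)=|\xi|^{p-2}\xi$ of (\ref{1}), and a classical solution is in particular a positive weak solution lying in $W_{loc}^{1,p}(R^n)\cap C(R^n)$, the whole machinery of Theorem \ref{th1.1} applies with $\beta=1$. The main point I would stress is that for a classical solution the operator $-div(|\nabla u|^{p-2}\nabla u)$ makes $u$ an $\mathcal{A}$-superharmonic function with $\inf_{R^n}u=0$ (the latter because $u$ is bounded-below positive and, as shown in the decay analysis, must tend to zero at infinity whenever any of the three conditions holds), so that the Wolff-potential representation (\ref{IE}) with a double-bounded $R(x)$ is valid. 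Once this bridge is in place, the corollary is a matter of chasing the equivalences through the established results.

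First I would establish $(1)\Leftrightarrow(2)$. This is exactly item (1) of Theorem \ref{th1.2}, transported to the PDE via part (1) of Theorem \ref{th1.1}: with $\beta=1$ the fast rate $\frac{n-p\beta}{p-1}$ becomes $\frac{n-p}{p-1}$, and the statement ``$u$ is bounded and decays with the fast rate if and only if $u\in L^{s_0}(R^n)$'' is precisely the claimed equivalence. Here I would note that the hypothesis $q>\frac{(n+a)(p-1)}{n-p\beta}$ needed to even have a positive solution is automatically in force, since by the first assertion of Theorem \ref{th1.2} there is no positive solution of (\ref{IE}) — hence none of (\ref{PDE}) — in the complementary range $q\in(0,\frac{(n+a)(p-1)}{n-p\beta}]$; thus for any classical solution under (\ref{ASSUM}) we are in the regime where Theorem \ref{th1.2}(1) gives a genuine dichotomy rather than a vacuous statement.

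Next I would establish $(1)\Leftrightarrow(3)$. The forward implication $(1)\Rightarrow(3)$ is the ``Furthermore'' clause of Theorem \ref{th1.1}(1): if $u\in L^{s_0}(R^n)$ then $u\in\mathcal{D}^{1,p}(R^n)$. For the reverse $(3)\Rightarrow(1)$ I would combine part (2) of Theorem \ref{th1.1} with the critical-exponent identity recorded in the introduction. Since $\nabla u\in L^p(R^n)$ (i.e. $u\in\mathcal{D}^{1,p}$, noting $p^*=s_0$ is not assumed but the inclusion $u\in L^{p^*}$ follows from the Hardy--Sobolev inequality (\ref{HS}) applied to $u\in\mathcal{D}^{1,p}$), Theorem \ref{th1.1}(2) gives $|x|^au^{q+1}\in L^1(R^n)$; one then reads off integrability of $u$ in the correct Lebesgue space from the weighted bound, using $s_0=\frac{n(q-p+1)}{p+a}$ with $\beta=1$ and the relation between $s_0$ and $p^*$. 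Chaining $(1)\Leftrightarrow(2)$ with $(1)\Leftrightarrow(3)$ then yields the three-fold equivalence.

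The hardest part will be the clean reduction from the classical PDE solution to the integral equation, specifically verifying that a positive classical solution of (\ref{PDE}) genuinely satisfies $\inf_{R^n}u=0$ and the $\mathcal{A}$-superharmonicity required for the representation (\ref{IE}) to hold — this is the content underlying the transfer in Theorem \ref{th1.1}(1), and everything downstream depends on it. The second delicate point is threading the equivalence $(3)\Rightarrow(1)$ through Theorem \ref{th1.1}(2) without circularity: I must make sure the passage from $\nabla u\in L^p$ to $|x|^au^{q+1}\in L^1$ and thence to $u\in L^{s_0}$ does not secretly presuppose the conclusion. Once these two reductions are justified, the remaining implications are immediate citations of the preceding theorems, so I would keep the write-up short and emphasize the citations rather than re-deriving the decay estimates.
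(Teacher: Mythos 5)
Your reduction of $(1)\Leftrightarrow(2)$ to Theorem \ref{th1.2} via the Wolff-potential representation, and your proof of $(1)\Rightarrow(3)$ via the ``Furthermore'' clause of Theorem \ref{th1.1}(1), match the paper. The genuine gap is in $(3)\Rightarrow(1)$. You propose: from $u\in\mathcal{D}^{1,p}(R^n)$ get $u\in L^{p^*}(R^n)$ by Sobolev embedding, apply Theorem \ref{th1.1}(2) to get $|x|^au^{q+1}\in L^1(R^n)$, and then ``read off'' $u\in L^{s_0}(R^n)$. That last step does not work, and no amount of chasing the weighted bound will make it work when $q$ is not the critical exponent. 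If $q$ is supercritical then $s_0>p^*$, and $u\in L^{p^*}$ gives no control of $\|u\|_{s_0}$ without an $L^\infty$ bound, which you do not have at this stage of the argument (boundedness is a consequence of item 1, which is what you are trying to prove). If $q$ is subcritical then $s_0<p^*$, and you would need extra decay at infinity; but the weight satisfies $|x|^a\le 1$ for $|x|\ge 1$ (recall $a\le 0$), so $|x|^au^{q+1}\in L^1$ is weaker than $u^{q+1}\in L^1$ precisely in the region that matters and supplies no such decay.

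The missing idea — and the reason the corollary is stated for \emph{classical} solutions — is the Pohozaev-type identity, Theorem \ref{th4.8}: a classical solution of (\ref{PDE}) lying in $\mathcal{D}^{1,p}(R^n)$ can exist only when $q$ equals the critical exponent $p_*-1=\frac{p(n+a)}{n-p}-1$. This rigidity is proved by multiplying the equation by $x\cdot\nabla u$ and integrating by parts, which requires the classical regularity; it is not a consequence of Theorems \ref{th1.1} or \ref{th1.2}. Once $q=p_*-1$ is forced, one has $s_0=p^*$, and then $u\in\mathcal{D}^{1,p}(R^n)\subset L^{p^*}(R^n)=L^{s_0}(R^n)$ closes the loop. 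In particular, for non-critical $q$ the implication $(3)\Rightarrow(1)$ holds only because item 3 is then impossible — a fact your argument cannot see. You should replace your $(3)\Rightarrow(1)$ step by an appeal to (or a proof of) Theorem \ref{th4.8}.
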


Finally, we shows that the weak solution of (\ref{1})
cannot be defined in $W^{1,p}(R^n)$ when $p \in [\sqrt{n},2]$.

\section{Nonexistence and slow decay rate}

In this section, we discuss the slow decay of positive
solutions of (\ref{IE}). In order to estimate the decay rate, we
first show that the exponent $q$ is larger than $\frac{(n+a)(p-1)}{n-p\beta}$.

\subsection{Nonexistence}

\begin{theorem} \label{th2.1}
If $q \in (0,\frac{(n+a)(p-1)}{n-p\beta}]$, then (\ref{IE}) does not have any
positive solution for any double bounded function $R(x)$.
\end{theorem}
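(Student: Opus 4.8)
The plan is to argue by contradiction: suppose $u>0$ solves (\ref{IE}) for some double bounded $R$. Since $\frac1C\le R\le C$, it suffices to work with the lower bound $u(x)\ge C^{-1}W_{\beta,p}(|y|^au^q)(x)$, so every pointwise lower bound we produce for $u$ feeds back into the Wolff potential. Because $u$ is positive and continuous, $u\ge m>0$ on some fixed ball, and since $-a<n$ the weight $|y|^a$ is locally integrable; restricting the $t$-integral in $W_{\beta,p}$ to $t\ge|x|+1$ and keeping only that ball yields a first lower bound $u(x)\ge c_0(1+|x|)^{-\gamma_0}$ with the fast exponent $\gamma_0=\frac{n-p\beta}{p-1}$, valid for large $|x|$.

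The core is a bootstrap on the decay exponent driven by the near field of the Wolff potential. Suppose $u(y)\ge c|y|^{-\gamma}$ for large $|y|$. For $t<|x|/2$ one has $|y|\sim|x|$ on $B_t(x)$, so $\int_{B_t(x)}|y|^au^q\,dy\gtrsim t^n|x|^{a-\gamma q}$; inserting this into $W_{\beta,p}$ and integrating over $t\in(0,|x|/2)$ gives the improved bound $u(x)\gtrsim|x|^{-\gamma'}$ with the affine recursion $\gamma'=\frac{q}{p-1}\gamma-\frac{p\beta+a}{p-1}$. This map has multiplier $\frac{q}{p-1}>1$ (since $q>p-1$) and repelling fixed point $\gamma_*=\frac{p\beta+a}{q-p+1}$, the slow rate. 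A direct computation shows the sign condition ``$q\le\frac{(n+a)(p-1)}{n-p\beta}$'' is exactly ``$\gamma_0\le\gamma_*$''. In the strictly subcritical case $\gamma_0<\gamma_*$ the iterates $\gamma_k=\gamma_*+(q/(p-1))^k(\gamma_0-\gamma_*)$ decrease to $-\infty$; once $\gamma_k\le\frac{p\beta+a}{q}$ the far field of $W_{\beta,p}$ diverges (its integrand is $\gtrsim t^{(a-\gamma_k q+p\beta)/(p-1)}\,t^{-1}$, non-integrable at $\infty$ as soon as $a-\gamma_k q+p\beta\ge0$), forcing $u(x)=\infty$, a contradiction.

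The main obstacle is the borderline case $q=\frac{(n+a)(p-1)}{n-p\beta}$, where $\gamma_0=\gamma_*$ and the power recursion stalls: here $|y|^au^q\sim|y|^{-n}$ and no gain in the polynomial exponent is available. I would instead track logarithmic corrections, showing $u(x)\gtrsim|x|^{-\gamma_0}(\log|x|)^{\sigma}$. Keeping the range $t\gtrsim|x|$ in $W_{\beta,p}$, where $\int_{B_t(x)}|y|^{-n}dy\sim\log t$, produces an initial gain $\sigma_1=\frac1{p-1}>0$; then the same near-field computation upgrades $\sigma$ by the factor $\frac{q}{p-1}=\frac{n+a}{n-p\beta}>1$ at each step, so $\sigma_k\to\infty$. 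For any fixed large $x$ this makes $(\log|x|)^{\sigma_k}\to\infty$, again contradicting $u(x)<\infty$. The delicate points throughout are the careful near/far-field splitting of the $t$-integral and the integrability of $|y|^au^q$ near the origin, which I would handle by isolating the fixed unit ball where $u$ and the weight are controlled.
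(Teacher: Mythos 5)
Your subcritical argument is essentially the paper's: the same initial fast-rate bound $u\gtrsim|x|^{-\gamma_0}$, the same affine recursion $\gamma'=\frac{q}{p-1}\gamma-\frac{p\beta+a}{p-1}$, and the same contradiction (once the exponent crosses $\frac{p\beta+a}{q}$ the tail of the Wolff integral diverges, and this divergence is insensitive to the shrinking multiplicative constants, which is why that step is robust). One coverage gap: the theorem is stated for all $q\in\bigl(0,\frac{(n+a)(p-1)}{n-p\beta}\bigr]$, which includes $0<q\le p-1$, where your framing breaks down --- the multiplier $\frac{q}{p-1}$ is then $\le 1$, and subcriticality is no longer equivalent to $\gamma_0<\gamma_*$ (for $q<p-1$ one has $\gamma_*<0<\gamma_0$). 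The paper treats $\frac{q}{p-1}<1$, $=1$, $>1$ separately; your case $q<p-1$ is easily repaired by noting the iterates still decrease to the negative fixed point $\frac{p\beta+a}{q-p+1}<0$, hence cross the divergence threshold, but as written your proof does not cover this range.

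The genuine gap is in the borderline case $q=\frac{(n+a)(p-1)}{n-p\beta}$. Your logarithmic bootstrap gives bounds of the form $u(x)\ge c_k|x|^{-\gamma_0}(\log|x|)^{\sigma_k}$ for $|x|\ge R_k$, with $\sigma_{k+1}=\frac{q\sigma_k+1}{p-1}$, and you conclude that ``for any fixed large $x$'' the right side tends to infinity. That inference does not follow: unlike the subcritical case, no finite stage of this iteration ever produces a divergent integral, so the contradiction must survive the limit $k\to\infty$ at a fixed point, and there the constants matter. Tracking the iteration, $c_{k+1}\approx A\,c_k^{q/(p-1)}$ (times factors like $\frac{1}{q\sigma_k+1}$ from integrating the logarithm), so $c_k$ decays double-exponentially, roughly like $\exp\bigl(-A(q/(p-1))^k\bigr)$, while $(\log|x|)^{\sigma_k}=\exp\bigl(\sigma_k\log\log|x|\bigr)$; whether $c_k(\log|x|)^{\sigma_k}\to\infty$ depends on a competition between uniform constants, and the thresholds $R_k$ must also be shown not to escape to infinity. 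This can be repaired (one can keep $R_k$ bounded because the required threshold inflation factor $2^{1/(q\sigma_k+1)}$ is summably close to $1$, and then the contradiction holds for $\log\log|x|$ large), but this bookkeeping is the crux of your route and is absent. The paper avoids it entirely: restricting the Wolff potential to scales $t\le R$ it derives the scale-invariant estimate
$$
\int_{B_{R/4}}|x|^au^q\,dx\;\ge\;c\Bigl(\int_{B_{R/4}}|y|^au^q\,dy\Bigr)^{\frac{q}{p-1}},\qquad c \text{ independent of } R,
$$
which (since $q>p-1$ at the critical value) first yields $\int_{R^n}|x|^au^q\,dx<\infty$, and then, applied on the annulus $B_{R/4}\setminus B_{R/8}$ and letting $R\to\infty$, forces $\int_{R^n}|x|^au^q\,dx=0$, i.e.\ $u\equiv0$. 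You should either adopt such a doubling/annulus argument for the borderline case or supply the uniform control of $c_k$ and $R_k$ that your log-iteration needs.
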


\begin{proof}

{\it Step 1.}
Let
\begin{equation} \label{2.10*}
0<q<\frac{(n+a)(p-1)}{n-p\beta}.
\end{equation}
Suppose that $u$ solves (\ref{IE}), then we deduce a contradiction.

{\it Substep 1.1.}
From (\ref{IE}), for $|x|>1$ we can get
\begin{equation} \label{low-ini}
u(x) \geq c\int_{2|x|}^\infty t^{\frac{\beta p-n}{p-1}} \frac{dt}{t}
=\frac{c}{|x|^{\frac{n-\beta p}{p-1}}}:=\frac{c}{|x|^{a_0}},
\end{equation}
since $\int_{B_2(0)\setminus B_1(0)}|y|^a u^q(y)dy \geq c$.
By this estimate and (\ref{2.10*}), we have
\begin{equation} \label{2.11*}
\begin{array}{ll}
u(x) &\geq c\displaystyle\int_{2|x|}^\infty (\frac{\int_{B_{t-|x|}(0)}|y|^{a-qa_0}dy}{
t^{n-\beta p}})^{\frac{1}{p-1}} \frac{dt}{t}\\[3mm]
&\geq c\displaystyle\int_{2|x|}^\infty (t^{\beta p+a-qa_0})^{\frac{1}{p-1}}
\frac{dt}{t}.
\end{array}
\end{equation}

When $\frac{q}{p-1} \in (0,\frac{\beta p+a}{n-\beta p}]$,
we have $\beta p+a-qa_0 \geq 0$. Eq. (\ref{2.11*}) implies $u(x)=\infty$. It is impossible.

Next, we consider the case $\frac{q}{p-1} \in (\frac{\beta p+a}{n-\beta p},
\frac{n+a}{n-\beta p})$.
Now (\ref{2.11*}) leads to
$$
u(x) \geq \frac{c}{|x|^{a_1}},
$$
where $a_1=\frac{q}{p-1}a_0-\frac{\beta p}{p-1}$.

{\it Substep 1.2.}
Write
\begin{equation} \label{2.14}
a_j=\frac{q}{p-1}a_{j-1}-\frac{\beta p+a}{p-1},~j=1,2,\cdots.
\end{equation}
Suppose that $a_k<a_{k-1}$ for $k=1,2,\cdots,j-1$.
By virtue of (\ref{2.10*}), it follows
$$\begin{array}{ll}
a_j-a_{j-1}&=(\displaystyle\frac{q}{p-1}-1)a_{j-1}-\frac{\beta p+a}{p-1}
<(\frac{q}{p-1}-1)a_0-\frac{\beta p+a}{p-1}\\[3mm]
&=(\displaystyle\frac{q}{p-1}-1)\frac{n-\beta p}{p-1}-\frac{\beta p+a}{p-1}
=(\frac{n-\beta p}{(p-1)^2})q-\frac{n+a}{p-1}\\[3mm]
&<(\displaystyle\frac{n-\beta p}{(p-1)^2})\frac{(n+a)(p-1)}{n-\beta p}-\frac{n+a}{p-1}
=0.
\end{array}
$$
Thus, $\{a_j\}_{j=0}^\infty$ is decreasing as long as (\ref{2.10*}) is true.

Furthermore, we claim that there must be $j_0>0$ such that $a_{j_0} \leq 0$.
Once it is true, similar to the argument in Substep 1.1, we also get
$\beta p+a-qa_{j_0-1} \geq 0$, which leads to $u(x)=\infty$.
This contradicts with the fact that $u$ is a positive solution.

In fact, by (\ref{2.14}) we get
$$
a_j=(\frac{q}{p-1})^ja_0-[1+\frac{q}{p-1}+\cdots+(\frac{q}{p-1})^{j-1}]
\frac{\beta p+a}{p-1}.
$$

If $\frac{q}{p-1}=1$, then we can find a large $j_0$ such that
$$
a_{j_0}=a_0-j_0\frac{\beta p+a}{p-1}
\leq 0.
$$

If $\frac{q}{p-1}\in (1,\frac{n+a}{n-\beta p})$, then
$a_0-\frac{\beta p+a}{q-p+1}<0$. We can find a large $j_0$ such that
$$\begin{array}{ll}
a_{j_0}&=(\displaystyle\frac{q}{p-1})^{j_0}a_0-\frac{(\frac{q}{p-1})^{j_0}-1}
{\frac{q}{p-1}-1}\frac{\beta p+a}{p-1}\\[3mm]
&=(\displaystyle\frac{q}{p-1})^{j_0}(a_0-\frac{\beta p+a}{q-p+1})
+\frac{\beta p+a}{q-p+1} \leq 0.
\end{array}
$$

If $\frac{q}{p-1} \in (0,1)$, letting $j \to \infty$, we get
$$
a_j=(\frac{q}{p-1})^ja_0-\frac{1-(\frac{q}{p-1})^j}
{1-\frac{q}{p-1}}\frac{\beta p+a}{p-1} \to \frac{\beta p+a}{q-p+1}<0.
$$
Thus, there must be $j_0$ such that
$a_{j_0} \leq 0$.

{\it Step 2.} Let $q=\frac{(n+a)(p-1)}{n-p\beta}$. We deduce the
contradiction if $u$ is a positive solution of (\ref{IE}).

For $R>0$, denote $B_R(0)$ by $B_R$. Using (\ref{IE})and the H\"older inequality,
we see that for any $x \in B_R$,
$$\begin{array}{ll}
u(x) &\geq c\displaystyle\int_0^R
(\frac{\int_{B_t(x)}|y|^au^q(y)dy}{t^{n-p\beta}})^{\frac{1}{p-1}}
\frac{dt}{t}\\[3mm]
&\geq cR^{-\frac{n-p\beta+1}{p-1}}
(\displaystyle\int_0^R(\int_{B_t(x)}|y|^au^q(y)dy)dt)^{\frac{1}{p-1}}.
\end{array}
$$
By exchanging the order of the integral variables, and noting
$B_{R/4} \times [R/4,R]$ is the subset of the cone $\{(y,t); t \in [|x-y|,
R], y \in B_R\}$, we have
$$\begin{array}{ll}
u(x) &\geq cR^{-\frac{n-p\beta+1}{p-1}}
(\displaystyle\int_{B_R}|y|^au^q(y)(\int_{|x-y|}^R dt)dy)^{\frac{1}{p-1}}\\[3mm]
&\geq cR^{-\frac{n-p\beta}{p-1}}
(\displaystyle\int_{B_{R/4}}|y|^au^q(y)dy)^{\frac{1}{p-1}}.
\end{array}
$$
Therefore, we get
\begin{equation} \label{low1}
|x|^a u^q(x) \geq c|x|^aR^{q\frac{p\beta-n}{p-1}}(\int_{B_{R/4}}|y|^a
u^q(y)dy)^{\frac{q}{p-1}}.
\end{equation}
Integrating on $B_{R/4}$ and using $q=\frac{(n+a)(p-1)}{n-p\beta}$, we obtain
$$\begin{array}{ll}
&\quad \displaystyle\int_{B_{R/4}}|x|^au^q(x)dx \\[3mm]
&\geq cR^{q\frac{p\beta-n}{p-1}}
\displaystyle\int_{B_{R/4}}|x|^adx(\int_{B_{R/4}}|y|^au^q(y)dy)^{\frac{q}{p-1}}\\[3mm]
&\geq c(\displaystyle\int_{B_{R/4}}|y|^au^q(y)dy)^{\frac{q}{p-1}}.
\end{array}
$$
Here $c$ is independent of $R$. Letting $R \to \infty$
and noting $q>p-1$, we have
\begin{equation} \label{tian}
\int_{R^n}|x|^a u^q(x) dx<\infty.
\end{equation}

Integrating (\ref{low1}) on $A_R=B_{R/4} \setminus B_{R/8}(0)$ yields
$$
\int_{A_R} |x|^a u^q(x)dx \geq c R^{q\frac{p\beta-n}{p-1}}\int_{A_R}|x|^adx
(\int_{B_{R/4}}|y|^a u^q(y)dy)^{\frac{q}{p-1}}.
$$
By $q=\frac{(n+a)(p-1)}{n-p\beta}$, it follows
$$
\int_{A_R} |x|^a u^q(x)dx \geq c(\int_{B_{R/4}}|y|^a
u^q(y)dy)^{\frac{q}{p-1}},
$$
where $c$ is independent of $R$. Letting $R \to \infty$, and noting
(\ref{tian}), we obtain
$$
\int_{R^n}|y|^a u^q(y)dy=0,
$$
which implies $u \equiv 0$. It is impossible.
\end{proof}

\subsection{Slow decay rate}

Theorem \ref{th2.1} shows that if (\ref{IE}) has the positive solution
$u$, then
\begin{equation} \label{cond}
q>\frac{(n+a)(p-1)}{n-p\beta}.
\end{equation}

To investigate the decay rates of $u$, we always assume (\ref{cond}) holds
hereafter.

By (\ref{cond}), we can see that $\frac{n-p\beta}{p-1}>\frac{p\beta+a}{q-p+1}$.
Thus, we call $\frac{n-p\beta}{p-1}$ the fast decay rate
and $\frac{p\beta+a}{q-p+1}$ the slow one.

Let $u$ be bounded but not integrable.
If it decays along $u(x) \simeq |x|^{-\theta}$
when $|x| \to \infty$, we prove that the rate $\theta$
must be the slow one $\frac{p\beta+a}{q-p+1}$.

\begin{theorem} \label{th5.1}
Let $r_0$ be an arbitrary given positive number in $[s_0,\infty)$.
Assume $u \in L^\infty(R^n) \setminus L^{r_0}(R^n)$ solves
(\ref{IE}) with (\ref{ASSUM}). If
$$
\lim_{|x| \to \infty}u(x)|x|^\theta \in (0,\infty),
$$
then $\theta$ must be the slow decay rate
$\frac{p\beta+a}{q-p+1}$.
\end{theorem}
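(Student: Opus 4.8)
The plan is to pin down $\theta$ by squeezing it between two one-sided bounds, both expressed through the slow rate. The starting point is that the hypothesis $\lim_{|x|\to\infty}u(x)|x|^\theta\in(0,\infty)$ provides the two-sided estimate $c_1|x|^{-\theta}\le u(x)\le c_2|x|^{-\theta}$ for all large $|x|$, and that the slow rate can be rewritten as $\frac{p\beta+a}{q-p+1}=\frac{n}{s_0}$. I will deduce $\theta\le n/s_0$ from the non-integrability hypothesis and $\theta\ge n/s_0$ from the integral equation, whence equality. A pleasant feature of this route is that neither the far range of the Wolff potential nor the total mass $\int|y|^au^q\,dy$ needs to be analyzed.

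For the upper bound on $\theta$, I combine boundedness with the decay estimate. Since $u\in L^\infty(R^n)\cap C(R^n)$ and $u\simeq|x|^{-\theta}$ at infinity, membership $u\in L^{r}(R^n)$ holds precisely when $\theta r>n$ (the integral over a fixed ball about the origin is harmless because $u$ is bounded). As $u\notin L^{r_0}(R^n)$, this forces $\theta\le n/r_0$, and since $r_0\ge s_0$ we obtain $\theta\le n/r_0\le n/s_0=\frac{p\beta+a}{q-p+1}$.

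For the lower bound on $\theta$, I exploit only the near range of the Wolff potential. Using that $R$ is double bounded and discarding the nonnegative tail,
\[
u(x)\ge \tfrac1C\, W_{\beta,p}(|y|^au^q)(x)\ge \tfrac1C\int_0^{|x|/2}\Big(\tfrac{\int_{B_t(x)}|y|^au^q\,dy}{t^{n-p\beta}}\Big)^{1/(p-1)}\tfrac{dt}{t}.
\]
When $t<|x|/2$ the ball $B_t(x)$ stays inside the annulus $|x|/2<|y|<3|x|/2$, so the lower bound $u(y)\ge c_1|y|^{-\theta}$ together with $a\le 0$ gives $|y|^au^q(y)\ge c\,|x|^{a-q\theta}$ there; hence $\int_{B_t(x)}|y|^au^q\,dy\ge c\,|x|^{a-q\theta}t^n$. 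Carrying out the remaining $t$-integral, which converges at $t=0$ because $p\beta>0$, yields
\[
u(x)\ge c\,|x|^{(a-q\theta+p\beta)/(p-1)}=c\,|x|^{-\theta_{\mathrm{out}}},\qquad \theta_{\mathrm{out}}:=\tfrac{q\theta-a-p\beta}{p-1}.
\]
Comparing with the upper decay estimate $u(x)\le c_2|x|^{-\theta}$ forces $\theta\le\theta_{\mathrm{out}}$, which after clearing the positive factors $p-1$ and $q-p+1$ is exactly $\theta\ge\frac{p\beta+a}{q-p+1}$.

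Combining the two inequalities gives $\theta=\frac{p\beta+a}{q-p+1}$, the slow rate. The substantive step is the localized Wolff estimate in the third paragraph: one must verify that on $B_t(x)$ with $t<|x|/2$ both the weight $|y|^a$ and the factor $u^q(y)$ are comparable to their values at $|y|\simeq|x|$, which is where the sign condition $a\le 0$ and the two-sided control of $u$ enter, and that restricting the outer integral to $(0,|x|/2)$ already captures the sharp power of $|x|$. I expect this comparison, rather than any delicate global estimate, to be the only place requiring care.
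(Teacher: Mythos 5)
Your proof is correct, but it reorganizes the harder half of the argument, so the two routes are worth comparing. The half giving $\theta\le\frac{p\beta+a}{q-p+1}$ coincides with the paper's Step 2: assuming a decay upper bound with exponent above the slow rate and integrating $r^{n-r_0\theta-1}$ at infinity to contradict $u\notin L^{r_0}(R^n)$ is exactly your contrapositive formulation, including the use of $r_0\ge s_0$. For the opposite inequality the paper argues by iteration: from the one-sided lower bound $u\ge C|x|^{-b_0}$, $b_0=\theta<\frac{p\beta+a}{q-p+1}$, it derives $u\ge c|x|^{-b_j}$ with $b_j=\frac{qb_{j-1}-p\beta-a}{p-1}$, computes $b_j=(b_0-\frac{p\beta+a}{q-p+1})(\frac{q}{p-1})^j+\frac{p\beta+a}{q-p+1}\to-\infty$, and concludes that $u$ blows up, contradicting finiteness. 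You apply the same exponent map exactly once (your $\theta_{\mathrm{out}}$ is precisely the paper's $b_1$) and then close the loop by comparing the resulting lower bound $u\ge c|x|^{-\theta_{\mathrm{out}}}$ against the upper bound $u\le c_2|x|^{-\theta}$ furnished by the limit hypothesis, which forces $\theta\le\theta_{\mathrm{out}}$ and hence $\theta\ge\frac{p\beta+a}{q-p+1}$ with no induction at all. This buys you brevity, and your near-range estimate (restricting to $t<|x|/2$ and using $|y|\simeq|x|$, $|y|^a\ge c|x|^a$ since $a\le 0$) also sidesteps the convergence caveat in the paper's far-range computation, where the $t$-integral over $(2|x|,\infty)$ only converges when $p\beta+a-qb_{j-1}<0$ and the divergent case must be handled separately. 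What the iteration buys the paper is a strictly stronger one-sided statement: no finite positive solution can satisfy $u\ge C|x|^{-\theta}$ at infinity with $\theta$ below the slow rate, with no matching upper bound assumed; that one-sided form is how the paper phrases Step 1 and is the form it re-invokes later in Section 4.3, whereas your comparison genuinely needs both sides of the limit hypothesis. As a proof of the theorem as stated, your version is complete, and the annulus comparability you flag as the only delicate point does hold for the reasons you give.
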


\begin{proof}
{\it Step 1.}
Let $\theta<\frac{p\beta+a}{q-p+1}$. We claim that
there does not exist $C>0$ such that as $|x| \to \infty$,
$$
u(x) \geq C|x|^{-\theta}.
$$
This result shows that the decay rate of $u$ is not slower
than the slow one $\frac{p\beta+a}{q-p+1}$.

If there exists $C>0$ such that for some large $|x|$,
$$
u(x) \geq C|x|^{-\theta}, \quad \theta<\frac{p\beta+a}{q-p+1}.
$$
By an iteration we can deduce the contradiction.

Denote $\theta$ by $b_0$. Similar to the derivation of (\ref{2.11*}),
for $|x|>1$ we have
$$
u(x) \geq c|x|^{-b_1}, \quad b_1=\frac{qb_0-p\beta-a}{p-1}.
$$
By induction, for some large $|x|$ there holds
$$
u(x) \geq c|x|^{-b_j}, \quad
b_0=\theta, \quad
b_j=\frac{qb_{j-1}-p\beta-a}{p-1}, \quad j=1,2,\cdots.
$$
We claim that there must be $j_0$ such that $b_{j_0}<0$, which leads to
$u(x)=\infty$. In fact, similar to the proof of Theorem \ref{th2.1},
there also holds
$$
b_j=(b_0-\frac{p\beta+a}{q-p+1})(\frac{q}{p-1})^j+\frac{p\beta+a}{q-p+1}.
$$
Noting $q>p-1$ (which is implied by (\ref{cond})) and $b_0-\frac{p\beta+a}{q-p+1}<0$,
we can find a large $j_0$ such
that $b_{j_0}<0$. It is impossible since the solution $u$ blows up.

{\it Step 2.}
Let $\theta>\frac{p\beta+a}{q-p+1}$, $r_0 \geq \frac{n(q-p+1)}{p\beta+a}$.
If $u \not\in L^{r_0}(R^n)$, we claim that
there does not exist $C>0$ such that as $|x| \to \infty$,
$$
u(x) \leq C|x|^{-\theta}.
$$
This result shows that the decay rate of $u$ is not faster
than the slow rate $\frac{p\beta+a}{q-p+1}$.

Suppose there exists $C>0$ such that as $|x| \to \infty$,
$$
u(x) \leq C|x|^{-\theta}, \quad \textrm{where}
\quad \theta>\frac{p\beta+a}{q-p+1}.
$$
Since $u$ is bounded, for some large $R>0$, there holds
$$\begin{array}{ll}
\displaystyle\int_{R^n} u^{r_0}(x)dx=&\displaystyle\int_{B_R(0)}u^{r_0}(x)dx
+\int_{R^n \setminus B_R(0)} u^{r_0}(x)dx\\[3mm]
&\leq C+C\displaystyle\int_R^\infty r^{n-r_0\theta} \frac{dr}{r}.
\end{array}
$$
By virtue of $r_0 \geq \frac{n(q-p+1)}{p\beta+a}$, we see
$$
\int_{R^n} u^{r_0}(x)dx
<\infty,
$$
which contradicts with $u \not\in L^{r_0}(R^n)$.
\end{proof}

\paragraph{Remark 2.1.} Moreover, if the positive solution is radially symmetric and
decreasing about the origin, then as $|x| \to \infty$
$$
u(x)=O(|x|^{-\frac{p\beta+a}{q-p+1}}).
$$

In fact,
when $t \in (0,|x|/2)$, in $D:=B_t(x) \cap \{|y| \geq |x|\}$, there hold
$$
u(y) \geq u(x), \quad and \quad |y| \leq 3|x|/2.
$$
In addition, $|D| > \frac{1}{2}|B_t(x)| \geq ct^n$. Therefore,
$$
u(x) \geq c(|x|^a u^q(x))^{\frac{1}{p-1}}\int_0^{|x|/2}
(\frac{\int_D dy}{t^{n-p\beta}})^{\frac{1}{p-1}}\frac{dt}{t}
\geq c|x|^{\frac{p\beta+a}{p-1}} u^{\frac{q}{p-1}}(x).
$$
This implies $u(x) \leq C|x|^{-\frac{p\beta+a}{q-p+1}}$.

\paragraph{Remark 2.2.} Consider the positive solutions $u \not\in L^\infty(R^n)$.
We can find a singular solution with the slow rate $\frac{p\beta+a}{q-p+1}$.
Let
$$
u(x)=c|x|^{-t}.
$$

\begin{enumerate}
\item If (\ref{cond}) holds, we claim that $u(x)$ solves (\ref{PDE})
with
$$
t=\frac{p+a}{q-p+1}, \quad
c=t^{\frac{p-1}{q-p+1}}[n-1-(p-1)(t+1)]^{\frac{1}{q-p+1}}.
$$

In fact, if writing $u(x)=U(|x|)=U(r):=cr^{-t}$, we can see that
the left hand side of (\ref{PDE}) is
\begin{equation} \label{xifenr}
\begin{array}{ll}
&\quad -div(|\nabla u|^{p-2} \nabla u)\\[3mm]
&=-|U'|^{p-2}[(p-1)U''+\displaystyle\frac{n-1}{r}U']\\[3mm]
&=\displaystyle\frac{c^{p-1}t^{p-1}}{r^{(p-2)(t+1)+(t+2)}}
[n-1-(p-1)(t+1)].
\end{array}
\end{equation}
By virtue of (\ref{cond}), we get $n-1>(p-1)(t+1)$, and hence
the value of the result above is positive. Noting the values of $t$ and $c$,
we see that (\ref{xifenr}) is equal to $c^qr^{a-tq}$, which is the exact
right hand side of (\ref{PDE}). In addition, the asymptotic rate $t$ is the
slow one when $|x| \to \infty$ and $|x| \to 0$, respectively.

\item We can find a double bounded function $R(x)$ such that $u(x)=c|x|^{-t}$
also solves (\ref{IE}). Here $t=\frac{p\beta}{q-p+1}$.

In fact, write
$$
W_1=\int_0^{|x|/2}[\frac{\int_{B_t(x)} |y|^a
u^q(y)dy}{t^{n-p\beta}}] ^{\frac{1}{p-1}}\frac{dt}{t},
$$
$$
W_2=\int_{|x|/2}^\infty [\frac{\int_{B_t(x)}|y|^a
u^q(y)dy}{t^{n-p\beta}}] ^{\frac{1}{p-1}}\frac{dt}{t}.
$$

If $t \in (0,|x|/2)$, we have $|x|/2<|y|<3|x|/2$ for $y \in B_t(x)$. Thus,
$$
c_1|x|^{a-qt}t^n \leq \int_{B_t(x)}|y|^au^q(y)dy \leq c_2|x|^{a-qt}t^n,
$$
where $0<c_1 \leq c_2$, and hence
$$
c_1|x|^{\frac{p\beta+a-qt}{p-1}} \leq W_1 \leq c_2|x|^{\frac{p\beta+a-qt}{p-1}}.
$$

If $t \geq |x|/2$, we have $B_t(x) \subset B(0,|x|+t) \subset B(0,3t)$. Thus,
by (\ref{cond}),
$$
\int_{B_t(x)}u^q(y)dy \leq C\int_{B_{3t}(0)}|y|^{a-qt}dy
\leq Ct^{n+a-qt}.
$$
Therefore,
$$
W_2 \leq C|x|^{\frac{p\beta+a-qt}{p-1}}.
$$
Noticing $t=\frac{p\beta+a}{q-p+1}$, and combining the estimates of $W_1$
and $W_2$, we obtain
$$
c_1(W_1+W_2) \leq u(x) \leq c_2(W_1+W_2).
$$
Setting
$$
R(x)=u(x)[W_1+W_2]^{-1},
$$
we know that $R(x)$ is double
bounded and $u(x)$ solves (\ref{IE}).
\end{enumerate}

\section{Integability and fast decay rate}

\subsection{Integrability}

\begin{theorem} \label{inter}
Assume $u \in L^{s_0}(R^n)$ solves (\ref{IE}) with (\ref{ASSUM}), where $s_0=\frac{n(q-p+1)}{p\beta+a}$. Then
\begin{equation}
u \in L^s(R^n), \quad \forall s > \frac{n(p-1)}{n-\beta p}.
\label{intervalSW}
\end{equation}
In addition, the lower bound $\frac{n-\beta p}{n(p-1)}$ of $s$ is optimal.
\end{theorem}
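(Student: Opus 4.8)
The plan is to run a regularity-lifting bootstrap directly on the integral equation (\ref{IE}), in the spirit of \cite{ChLM}, \cite{JL} and \cite{LeiLi}. Since $R$ is double bounded it suffices to control $W_{\beta,p}(|y|^au^q)$, and because $p\in(1,2]$ gives $\frac{1}{p-1}\geq 1$, the convexity of $s\mapsto s^{1/(p-1)}$ makes the Wolff potential quasi-subadditive, $W_{\beta,p}(f+g)\leq 2^{\frac{2-p}{p-1}}(W_{\beta,p}(f)+W_{\beta,p}(g))$, which lets me split the source term. Fix a large $A>0$, set $E_A:=\{|x|>A\}\cup\{u>A\}$, and write $u^q=u^{q-p+1}\,u^{p-1}$, freezing $u^{q-p+1}$ as a coefficient to obtain the degree-one (in $v$) operator
$$
T_Av(x):=R(x)\,W_{\beta,p}\big(|y|^a(u\mathbf{1}_{E_A})^{q-p+1}\,|v|^{p-1}\big)(x).
$$
Splitting the integrand over $E_A$ and $E_A^c$ and applying quasi-subadditivity yields $u\leq C\,T_Au+g_A$, where $g_A:=C\,W_{\beta,p}(|y|^a(u\mathbf{1}_{E_A^c})^q)$ is built from the part on which both $u$ and $|x|$ are bounded; this $g_A$ is bounded and decays rapidly, hence lies in $L^s(R^n)$ for every $s$ in the target range.

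The heart of the argument is the mapping estimate: for each $s>\frac{n(p-1)}{n-p\beta}$ I would establish
$$
\|T_Av\|_{L^s(R^n)}\leq C\,\|u\mathbf{1}_{E_A}\|_{L^{s_0}(R^n)}^{\frac{q-p+1}{p-1}}\,\|v\|_{L^s(R^n)}.
$$
Here the exponents are forced by scaling, since $T_A$ is homogeneous of degree $\frac{1}{p-1}$ in the coefficient and degree one in $v$, and the admissible interval $(\frac{n(p-1)}{n-p\beta},\infty)$ for $s$ is exactly the scaling balance that defines $s_0=\frac{n(q-p+1)}{p\beta+a}$ together with the fast-rate threshold. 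Because $u\in L^{s_0}(R^n)$, the absolute continuity of the integral gives $\|u\mathbf{1}_{E_A}\|_{L^{s_0}}\to 0$ as $A\to\infty$; choosing $A$ so large that $C\,\|u\mathbf{1}_{E_A}\|_{L^{s_0}}^{(q-p+1)/(p-1)}\leq\frac12$, and combining with the elementary inequality $\big||v_1|^{p-1}-|v_2|^{p-1}\big|\leq|v_1-v_2|^{p-1}$ (valid since $p-1\leq1$) and quasi-subadditivity, makes $T_A$ a contraction simultaneously on $L^{s_0}(R^n)$ and on $L^s(R^n)$. Since $u$ solves $w=C\,T_Aw+g_A$ in $L^{s_0}$, $g_A\in L^{s_0}\cap L^s$, and the same equation has a unique $L^s$ solution by contraction, the regularity-lifting lemma identifies the two solutions and gives $u\in L^s(R^n)$ for every such $s$, which is (\ref{intervalSW}).

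For optimality I would exhibit a solution in $L^{s_0}(R^n)$ that just fails the endpoint. The fast-rate solution, whose existence in the critical case is provided by \cite{LeiLi}, satisfies $u(x)\asymp|x|^{-\frac{n-p\beta}{p-1}}$ as $|x|\to\infty$, so with $s=\frac{n(p-1)}{n-p\beta}$ one has $\int_{|x|>1}u^{s}\,dx\asymp\int_{|x|>1}|x|^{-n}\,dx=\infty$; hence the threshold cannot be attained and the open lower bound is sharp. The step I expect to be the main obstacle is the weighted mapping estimate for $T_A$: one must prove a Hardy–Littlewood–Sobolev-type bound for the Wolff potential and then distribute, via Hölder's inequality, the singular weight $|y|^a$ (which blows up at the origin since $a\leq0$), the frozen coefficient $(u\mathbf{1}_{E_A})^{q-p+1}$, and the factor $|v|^{p-1}$, in such a way that the resulting exponent constraints collapse exactly to the interval $s>\frac{n(p-1)}{n-p\beta}$ rather than a strictly smaller one.
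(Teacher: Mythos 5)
Your overall skeleton (freeze a coefficient built from $u\mathbf{1}_{E_A}$, prove an $L^s$ mapping bound with constant $\|u\mathbf{1}_{E_A}\|_{s_0}^{(q-p+1)/(p-1)}$, lift regularity, then prove optimality by a decay lower bound) matches the paper's strategy, and your exponent bookkeeping in the mapping estimate is in fact consistent with the weighted Wolff--HLS inequality on exactly the range $s>\frac{n(p-1)}{n-p\beta}$. But the fixed-point step, as you set it up, fails for two concrete reasons. First, quasi-subadditivity of $W_{\beta,p}$ only gives the \emph{inequality} $u\leq C\,T_Au+g_A$, whereas you then assert that ``$u$ solves $w=C\,T_Aw+g_A$'' --- it does not, and the regularity-lifting lemma of \cite{JL} requires the identity, not a one-sided bound. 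Second, your operator $T_Av=R\,W_{\beta,p}(|y|^a(u\mathbf{1}_{E_A})^{q-p+1}|v|^{p-1})$ is homogeneous of degree one but \emph{not linear}, and the elementary inequality $\bigl||v_1|^{p-1}-|v_2|^{p-1}\bigr|\leq|v_1-v_2|^{p-1}$ does not survive the outer power $\frac{1}{p-1}\geq 1$ in the Wolff potential: using $|a^r-b^r|\leq r\max(a,b)^{r-1}|a-b|$ with $r=\frac{1}{p-1}$, the best you get is
$$
\|T_Av_1-T_Av_2\|_s\leq C\,\epsilon_A\,\max(\|v_1\|_s,\|v_2\|_s)^{2-p}\,\|v_1-v_2\|_s^{p-1},
$$
a H\"older-type estimate with exponent $p-1<1$, which is not a Lipschitz contraction (iterating it does not drive the error to zero). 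The paper sidesteps both obstacles with a sharper decomposition: it splits the outer power as $\frac{1}{p-1}=\frac{2-p}{p-1}+1$, keeps the full $u^q$ in the $\frac{2-p}{p-1}$ factor, and writes $u^q=u_A^{q-1}u+u_B^q$ inside the linear factor (valid since $u_A,u_B$ have disjoint supports). This produces an exact identity $u=Tu+F$ with $T$ genuinely \emph{linear} in $g$, so the norm bound $\|Tg\|_\sigma\leq C\|u\|_{s_0}^{2-p}\|u_A\|_{s_0}^{q-1}\|g\|_\sigma\leq\frac12\|g\|_\sigma$ really is a contraction, and the lifting lemma applies. Your scheme could in principle be repaired by a monotone iteration of the inequality (using positivity, monotonicity, degree-one homogeneity, and summing $\sum_k (CC')^kT_A^kg_A$ while $\|T_A^Nu\|_{s_0}\to0$), but you did not supply that argument, and as written the step is broken.

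The optimality argument also has a gap: you invoke ``the fast-rate solution, whose existence in the critical case is provided by \cite{LeiLi},'' but \cite{LeiLi} establishes decay properties of given integrable solutions, not existence of one; moreover this would only address the critical exponent. The paper's argument is self-contained and universal: directly from (\ref{IE}), restricting the $t$-integral to $(2|x|,4|x|)$ and using the fixed contribution $\int_{B_2(0)\setminus B_1(0)}|y|^au^q\,dy\geq c$, \emph{every} positive solution satisfies $u(x)\geq c|x|^{\frac{\beta p-n}{p-1}}$ for large $|x|$, so the very solution $u$ under consideration fails to lie in $L^s$ whenever $\frac1s\geq\frac{n-\beta p}{n(p-1)}$. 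You should replace your existence-based example with this two-line lower bound.
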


\begin{proof}
{\it Step 1.}
For $A>0$, set
$$\begin{array}{lll}
&u_A(x)=w(x), &\quad if \quad u(x)>A \quad or \quad |x|>A;\\
&u_A(x)=0, &\quad otherwise,
\end{array}
$$
and $u_B(x)=u(x)-u_A(x)$. Let $\sigma$ satisfy
\begin{equation} \label{2.7}
\frac{2-p}{s_0}<\frac{1}{\sigma}<\frac{2-p}{s_0}
+\frac{n-\beta p}{n}.
\end{equation}
For $g \in L^\sigma(R^n)$,
define operators $T$ and $S$,
$$
Tg(x):=R(x)\int_0^\infty (\frac{\int_{B_t(x)}|y|^au^{q}(y)dy}{t^{n-\beta p}})^{\frac{2-p}{p-1}}
\frac{\int_{B_t(x)}|y|^au_A^{q-1}(y)g(y)dy}{t^{n-\beta p}} \frac{dt}{t}
$$
$$
Sg(x):=\int_0^\infty (\frac{\int_{B_t(x)}|y|^au_A^{q-1}(y)g(y)dy}{t^{n-\beta p}})^{\frac{1}{p-1}}
\frac{dt}{t}
$$
and write
$$
F(x):=R(x)\int_0^\infty (\frac{\int_{B_t(x)}|y|^au^{q}(y)dy}{t^{n-\beta p}})^{\frac{2-p}{p-1}}
\frac{\int_{B_t(x)}|y|^au_B^{q}(y)dy}{t^{n-\beta p}} \frac{dt}{t}
$$
Clearly, $u$ is a solution of the following equation
$$
g=Tg+F.
$$

{\it Step 2.}
$T$ is a contraction map from $L^\sigma(R^n)$ into itself.

In fact, by the H\"older inequality, there holds $|Tg| \leq
Cu^{2-p} |Sg|^{p-1}.$ Therefore, we get
\begin{equation} \label{2.3}
\|Tg\|_\sigma \leq C\|u\|_{s_0}^{2-p} \|Sg\|_{\tau}^{p-1}
\end{equation}
where $\tau>0$ satisfies
\begin{equation} \label{2.4}
\frac{1}{\sigma}=\frac{2-p}{s_0}+\frac{p-1}{\tau}.
\end{equation}
By (\ref{2.7}) and (\ref{2.4}), we get
\begin{equation} \label{2.2}
0<\frac{p-1}{\tau}<1-\frac{\beta p}{n}.
\end{equation}
Therefore, we can use the weighted Hardy-Littlewood-Sobolev inequality
and the Wolff type inequality to obtain
\begin{equation} \label{2.5}
\|Sg\|_{\tau} \leq C\|u_A^{q-1}
g\|_{\frac{n\tau}{n(p-1)+\tau(\beta p+a)}}^{\frac{1}{p-1}}.
\end{equation}
Since (\ref{2.4}) and $s_0=\frac{n(q-p+1)}{p\beta+a}$ lead to
$\frac{p-1}{\tau}-\frac{1}{\sigma} =\frac{q-1}{s_0}-\frac{\beta
p}{n},$ it follows from (\ref{2.5}) and the H\"older inequality
that $\|Sg\|_\tau^{p-1} \leq C\|u_A\|_{s_0}^{q-1} \|g\|_\sigma.$
Inserting this into (\ref{2.3}) yields
\begin{equation} \label{2.6}
\|Tg\|_\sigma \leq C\|u\|_{s_0}^{2-p}
\|u_A\|_{s_0}^{q-1}
\|g\|_\sigma.
\end{equation}

By virtue of $u \in L^{s_0}(R^n)$, $C\|u\|_{s_0}^{2-p}
\|u_A\|_{s_0}^{q-1} \leq \frac{1}{2}$ when $A$ is sufficiently
large. Then $T$ is a shrinking operator. Noticing that $T$ is
linear, we know that $T$ is a contraction map from
$L^{\sigma}(R^n)$ to itself as long as $\sigma$ satisfies
(\ref{2.7}).

{\it Step 3.} Estimating $F$ to lift the regularity.

Similar to (\ref{2.3}) and (\ref{2.5}), for all $\sigma$ satisfying (\ref{2.7}),
there holds
$$
\|F\|_\sigma \leq C\|u\|_{s_0}^{2-p}
\|u_B^{q}\|_{\frac{n\tau}{n(p-1)+\tau(\beta p+a)}}
$$
where $\tau$ satisfies (\ref{2.2}). Noting $u\in L^{s_0}(R^n)$ and
the definition of $u_B$, we see that $F \in L^{\sigma}(R^n)$ as
long as $\sigma$ satisfies (\ref{2.7}). Taking $X=L^{s_0}(R^n)$,
$Y=L^{\sigma}(R^n)$ and $Z=L^{s_0}(R^n) \cap L^{\sigma}(R^n)$ in
Lemma 2.1 of \cite{JL}, we have $u \in L^{\sigma}(R^n)$ for all
$\sigma$ satisfying (\ref{2.7}).

{\it Step 4.}
Extend the interval from (\ref{2.7}).

Let
\begin{equation} \label{2.10}
\frac{1}{s} \in (0,\frac{n-\beta p}{n(p-1)}).
\end{equation}
Thus, we can use the weighted Hardy-Littlewood-Sobolev inequality
and the Wolff type inequality to deduce that
\begin{equation}
\|u\|_s \leq C\|u^{q}\|_{\frac{ns}{n(p-1)+s(\beta p+a)}}^{\frac{1}{p-1}}
\leq C\|u\|_{\frac{nsq}{n(p-1)+s(\beta p+a)}}^{\frac{q}{p-1}}.
\label{2.8}
\end{equation}
Noting (\ref{2.7}), from (\ref{2.8}) we see that $\|u\|_s <\infty$
as long as $s$ satisfies
\begin{equation} \label{2.9}
\frac{2-p}{s_0}< \frac{n(p-1)+s(\beta p+a)}{nsq}
<\frac{2-p}{s_0}+\frac{n-\beta p}{n}.
\end{equation}

Next, we will prove that
\begin{equation} \label{claim}
\textrm{Eq. (\ref{2.9}) is true as long as
(\ref{2.10}) holds}.
\end{equation}

First, $(p-1)(q-1)>0$ leads to $q(2-p)<q-(p-1)$. Thus,
$\frac{2-p}{q-p+1}<\frac{1}{q}$. Multiplying by $\frac{p\beta+a}{n}$
yields
\begin{equation} \label{2.11}
\frac{2-p}{s_0}< \frac{\beta p+a}{nq}.
\end{equation}
Second, (\ref{cond}) shows $n+a \geq \frac{q(p\beta+a)}{q-(p-1)}$.
Hence, $n-p\beta \geq \frac{(p-1)(p\beta+a)}{q-(p-1)}$.
Multiplying by $(q-1)$ yields $(n-p\beta)(q-1) \geq
(p\beta+a)(1-\frac{q(2-p)}{q-p+1})$ or
$$
q\frac{(2-p)(p\beta+a)}{q-p+1}+q(n-p\beta)-(p\beta+a) \geq n-p\beta.
$$
Multiplying by $\frac{1}{n(p-1)}$, we get
\begin{equation} \label{2.12}
[\frac{2-p}{s_0}+\frac{n-p\beta}{n}-\frac{p\beta+a}{nq}]\frac{q}{p-1} \geq \frac{n-p\beta}{n(p-1)}.
\end{equation}

By using (\ref{2.11}) and (\ref{2.12}), we can see (\ref{claim}).

{\it Step 5.} We claim that $\frac{n-\beta p}{n(p-1)}$ is optimal. In fact,
for sufficiently large $|x|$, from (\ref{IE}) we deduce that
\begin{equation} \label{low}
\begin{array}{ll}
u(x) &\geq c \displaystyle\int_{2|x|}^{4|x|}
(\frac{\int_{B_2(0)\setminus B_1(0)}|y|^au^q(y)dy}{t^{n-\beta p}})^{\frac{1}{p-1}}
\frac{dt}{t}\\[3mm]
&\geq c\displaystyle\int_{2|x|}^{4|x|}(\frac{1}{t^{n-\beta p}})^{\frac{1}{p-1}}
\frac{dt}{t} \geq c|x|^{\frac{\beta p-n}{p-1}}.
\end{array}
\end{equation}
If $\frac{1}{s} \geq \frac{n-\beta p}{n(p-1)}$, then for some large constant $d>0$,
$$
\|u\|_{L^s(R^n\setminus B_d(0))}^s \geq c\int_d^{\infty}r^{n-s\frac{n-\beta p}{p-1}}
\frac{dr}{r} =\infty.
$$
Theorem \ref{inter} is proved.
\end{proof}

\begin{theorem} \label{boundth}
Assume $u \in L^{s_0}(R^n)$ solve (\ref{IE}) with (\ref{ASSUM}). Then $u$ is bounded in $R^n$.
\end{theorem}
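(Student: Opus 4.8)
The plan is to exploit the double boundedness of $R$ to reduce the assertion to a uniform bound on the Wolff potential $W:=W_{\beta,p}(|y|^au^q)$, and then to split $W$ at the fixed radius $t=1$,
$$
W(x)=\left(\int_0^1+\int_1^\infty\right)\left[\frac{\int_{B_t(x)}|y|^au^q(y)\,dy}{t^{n-p\beta}}\right]^{\frac{1}{p-1}}\frac{dt}{t}=:I_1(x)+I_2(x),
$$
treating the outer (large $t$) and inner (small $t$) pieces by different devices. Since $u(x)=R(x)W(x)$ and $\frac1C\le R\le C$, it suffices to bound $I_1$ and $I_2$ uniformly in $x$.

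For the tail $I_2$ I would simply use $\int_{B_t(x)}|y|^au^q\le M:=\int_{R^n}|y|^au^q\,dy$, so that $I_2(x)\le M^{1/(p-1)}\int_1^\infty t^{-(n-p\beta)/(p-1)}\frac{dt}{t}<\infty$ uniformly in $x$, the $t$-integral converging because $n>p\beta$. The real content here is the finiteness of $M$, which I would get by splitting over $B_1$ and its complement. On $B_1$, a H\"older inequality using that the weight is locally integrable ($-a<n$) together with the high integrability $u\in L^{q\gamma}$ furnished by Theorem \ref{inter} gives a finite contribution. On $R^n\setminus B_1$, when $a=0$ condition (\ref{cond}) is exactly $q>\frac{n(p-1)}{n-p\beta}$, so $u^q$ is already integrable at infinity by Theorem \ref{inter}; when $a<0$ I would instead exploit the decay of $|y|^a$, applying H\"older with exponents $\gamma',\gamma$ where $\gamma'>n/(-a)$ makes $|y|^{a\gamma'}$ integrable at infinity and $\gamma=\gamma'/(\gamma'-1)\in(1,\tfrac{n}{n+a})$. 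Because (\ref{cond}) is equivalent to $\frac{nq}{n+a}>\frac{n(p-1)}{n-p\beta}$, one can choose $\gamma'$ slightly above $n/(-a)$ so that $q\gamma$ still exceeds the integrability threshold of Theorem \ref{inter}, which yields $M<\infty$.

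For the near part $I_1$ (where $t<1$) I would estimate $\int_{B_t(x)}|y|^au^q$ by a local H\"older inequality, combining the elementary uniform bound $\int_{B_t(x)}|y|^{a\gamma'}\,dy\le Ct^{n+a\gamma'}$, valid whenever $a\gamma'>-n$ (proved by the split $|x|<2t$ versus $|x|\ge2t$), with $\|u\|_{q\gamma}<\infty$. This gives $\int_{B_t(x)}|y|^au^q\le Ct^{n/\gamma'+a}$ uniformly in $x$, so the integrand of $I_1$ is $\le Ct^{(n/\gamma'+a-(n-p\beta))/(p-1)}$; its exponent is positive — hence $I_1$ is finite and uniform, covering also $x$ near the origin — precisely when $1/\gamma'>\frac{n-p\beta-a}{n}$, and such an admissible $\gamma'>1$ exists exactly because $p\beta+a>0$, i.e. $-a<p\beta$, which is part of (\ref{ASSUM}); taking $\gamma'$ near $1$ keeps $q\gamma$ large so that $u\in L^{q\gamma}$ holds by Theorem \ref{inter}. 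Adding the two estimates gives $u(x)\le CR(x)(I_1(x)+I_2(x))\le C'$ for all $x$. The main obstacle I expect is the finiteness of $M=\int_{R^n}|y|^au^q$: this is the one place where the sharp hypothesis (\ref{cond}) and the full range of integrability from Theorem \ref{inter} must be used jointly (for $a<0$, through the decay of the weight at infinity), whereas the near-origin piece is a routine H\"older estimate whose only genuine requirement is the slack $-a<p\beta$ already built into (\ref{ASSUM}).
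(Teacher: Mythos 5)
Your proposal is correct, but it handles the tail of the Wolff potential by a genuinely different device than the paper. The inner part ($t<1$) is essentially identical in both arguments: a H\"older inequality with a conjugate pair $(\gamma,\gamma')$ chosen so that $1/\gamma<(p\beta+a)/n$, the uniform estimate $\int_{B_t(x)}|y|^{a\gamma'}dy\le Ct^{n+a\gamma'}$, and the high integrability from Theorem \ref{inter}; your $\gamma$ is the paper's $l$. The divergence is at large $t$. You first prove the global mass bound $M=\int_{R^n}|y|^au^q\,dy<\infty$ (splitting at $|y|=1$, exploiting the decay of the weight at infinity via H\"older together with Theorem \ref{inter}, which indeed forces you to invoke (\ref{cond})), and then bound $I_2(x)\le M^{1/(p-1)}\int_1^\infty t^{-(n-p\beta)/(p-1)}\,\frac{dt}{t}$ uniformly. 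The paper avoids proving $M<\infty$ at this stage: it instead shows (estimate (\ref{tec})) that $H_2(x)\le Cu(z)$ for every $z\in B_\delta(x)$, by enlarging $B_t(x)$ to $B_{t+\delta}(z)$, so that $u(x)\le C+Cu(z)$; averaging this inequality over $B_\delta(x)$ and applying H\"older with $\|u\|_{s_0}$ yields the uniform bound. In fact, the paper only establishes $\int_{R^n}|y|^au^q\,dy<\infty$ (its (\ref{time9})) later, inside the proof of Theorem \ref{rateth2}, and there it uses boundedness of $u$ for the near-origin piece; your argument shows this integrability can be obtained without boundedness, since your $B_1$ piece rests on $u\in L^{q\gamma}$ with $\gamma>n/(n+a)$ (so $q\gamma>\frac{n(p-1)}{n-p\beta}$ by (\ref{cond})), hence there is no circularity, and as a bonus (\ref{time9}) becomes available before Theorem \ref{boundth}. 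What the paper's averaging trick buys is a tail estimate that needs only $u\in L^{s_0}$ and no separate appeal to the structure of the weight at infinity; what your route buys is a more self-contained, purely pointwise bound on the Wolff potential through the total mass, and an earlier proof of the global integrability of the source term.
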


\begin{proof}
In view of (\ref{IE}),
$$\begin{array}{ll}
u(x)
&\leq C(\displaystyle\int_0^1[\frac{\int_{B_t(x)}|y|^au^q(y)
dy}{t^{n-\beta p}}]^{\frac{1}{p-1}} \frac{dt}{t}\\[3mm]
&+\displaystyle\int_1^{\infty}[\frac{\int_{B_t(x)}|y|^au^q(y)
dy}{t^{n-\beta p}}]^{\frac{1}{p-1}} \frac{dt}{t})\\[3mm]
&:=C(H_1+H_2).
\end{array}
$$

By H\"older's inequality, for any $l>1$ satisfying $n+\frac{al}{l-1}>0$, we have
\begin{equation} \label{frac}
\int_{B_t(x)}|y|^au^q(y) dy \leq C\|u^{q}\|_l(\int_{B_t(x)}|y|^{\frac{al}{l-1}}dy)^{1-1/l}.
\end{equation}
When $t \geq |x|/2$,
$$
\int_{B_t(x)}|y|^{\frac{al}{l-1}}dy \leq \int_{B_{|x|+t}(0)}|y|^{\frac{al}{l-1}}dy
\leq Ct^{n+\frac{al}{l-1}}.
$$
When $t<|x|/2$, $|y|>|y-x|$ for all $y \in B_t(x)$. Hence
$$
\int_{B_t(x)}|y|^{\frac{al}{l-1}}dy \leq \int_{B_{t}(x)}|y-x|^{\frac{al}{l-1}}dy
\leq Ct^{n+\frac{al}{l-1}}.
$$
Substituting these estimates into (\ref{frac}), we get
$$
\int_{B_t(x)}|y|^au^q(y) dy \leq C\|u^{q}\|_l t^{(1-1/l)n+a}.
$$
Take $l$ sufficiently large such that $ql>\frac{n(p-1)}{n-p\beta}$
and $p\beta+a-n/l>0$.
According to Theorem \ref{inter}, $\|u^{q}\|_l<\infty$.
Therefore,
$$
H_1 \leq C\int_0^1(\frac{t^{(1-1/l)n+a}}{t^{n
-\beta p}})^{\frac{1}{p-1}} \frac{dt}{t}
\leq C\int_0^1t^{\frac{\beta p+a-n/l}{p-1}} \frac{dt}{t}
\leq C.
$$

If $z \in B_\delta(x)$, then $B_t(x) \subset B_{t+\delta}(z)$.
For $\delta \in (0,1)$ and $z \in B_\delta(x)$,
\begin{equation} \label{tec}
\begin{array}{ll}
H_2&=\displaystyle\int_1^{\infty}[\frac{\int_{B_t(x)}|y|^au^q(y)
dy}{t^{n-\beta p}}]^{\frac{1}{p-1}} \frac{dt}{t}\\[3mm]
&\leq
\displaystyle\int_1^{\infty} (\frac{\int_{B_{t+\delta}(z)}|y|^au^q(y)
dy}{(t+\delta)^{n-\beta p}})^{\frac{1}{p-1}}
(\frac{t+\delta}{t})^{\frac{n-\beta p}{p-1}+1}
\frac{d(t+\delta)}{t+\delta}\\[3mm]
&\leq (1+\delta)^{\frac{n-\beta p}{p-1}+1}
\displaystyle\int_{1+\delta}^{\infty}
(\frac{\int_{B_t(z)}|y|^au^q(y)dy}{t^{n-\beta p}})^{\frac{1}{p-1}}
\frac{dt}{t}\leq Cu(z).
\end{array}
\end{equation}

Combining the estimates of $H_1$ and $H_2$, we have $u(x) \leq
C+Cu(z)$ for $z \in B_\delta(x),$ where $\delta \in (0,1)$.
Integrating on $B_\delta(x)$, we get
$$\begin{array}{ll}
|B_\delta(x)|u(x) &\leq C+C\displaystyle\int_{B_\delta(x)}u(z)dz \\[3mm]
&\leq
C+C\|u\|_{s_0}
|B_\delta(x)|^{1-\frac{1}{s_0}} \leq C.
\end{array}
$$
This shows $u$ is bounded in $R^n$. Theorem \ref{boundth} is proved.
\end{proof}

\subsection{Fast decay rate}

\begin{theorem} \label{decayth}
Assume $u \in L^{s_0}(R^n)$ solves (\ref{IE}) with (\ref{ASSUM}). Then
\begin{equation} \label{decay}
\lim_{|x| \to \infty}u(x)=0.
\end{equation}
\end{theorem}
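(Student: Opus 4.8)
The plan is to combine the two facts already established—that $u$ is bounded (Theorem \ref{boundth}) and that $u\in L^s(R^n)$ for every $s>\frac{n(p-1)}{n-\beta p}$ (Theorem \ref{inter})—to show that the Wolff potential on the right of (\ref{IE}) is small once the base point is far from the origin. Since $R(x)$ is double bounded, it suffices to estimate $W_{\beta,p}(|y|^au^q)(x)$. The guiding idea is that for large $|x|$ the balls $B_t(x)$ with $t<|x|/2$ all lie in $\{|y|>|x|/2\}$, which is far from the origin where the weight $|y|^a$ concentrates; there the tail of $u$ in $L^{ql}$ is small, and the weight factor is harmless because $a\le 0$. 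Only the balls with $t\gtrsim|x|$ reach the origin, and for those the factor $t^{-(n-\beta p)/(p-1)}$ supplies a negative power of $|x|$.

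Concretely, I would fix an exponent $l>1$ with $ql>\frac{n(p-1)}{n-\beta p}$ (so that $u^q\in L^l(R^n)$ by Theorem \ref{inter}) and $n+\frac{al}{l-1}>0$, and record the absolute continuity of the integral: $\varepsilon(\rho):=\big(\int_{|y|>\rho}u^{ql}\,dy\big)^{1/l}\to 0$ as $\rho\to\infty$. Then I would split, for $|x|>2$, the potential $W_{\beta,p}(|y|^au^q)(x)=I_1+I_2+I_3$ according to $t\in(0,1)$, $t\in(1,|x|/2)$ and $t\in(|x|/2,\infty)$.

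For $I_1$ and $I_2$ (the ranges $t<|x|/2$) the ball $B_t(x)$ is contained in $\{|y|>|x|/2\}$, so $|y|^a\le (|x|/2)^a\le 1$ and, by Hölder's inequality as in (\ref{frac}), $\int_{B_t(x)}|y|^au^q\,dy\le (|x|/2)^a\,\varepsilon(|x|/2)\,Ct^{n(1-1/l)}$. Substituting into the Wolff integrand and integrating in $t$ gives a bound of the form $C\,(|x|/2)^{a/(p-1)}\varepsilon(|x|/2)^{1/(p-1)}$ times a power of $|x|$: choosing $l$ with $\beta p-\tfrac{n}{l}>0$ makes the $t$-integral converge at $t=0$, which controls $I_1$, while for $I_2$ one either takes $l<\frac{n}{\beta p}$ so that the $t$-integral over $(1,|x|/2)$ stays bounded, or absorbs the emerging power of $|x|$ into the weight factor $(|x|/2)^{a/(p-1)}$. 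In every case the factor $\varepsilon(|x|/2)^{1/(p-1)}\to 0$ forces $I_1,I_2\to 0$. For $I_3$ (the range $t\ge|x|/2$) I would instead use $B_t(x)\subset B_{3t}(0)$ together with the global estimate $\int_{B_t(x)}|y|^au^q\,dy\le C\|u^q\|_l\,t^{(1-1/l)n+a}$; choosing $l<\frac{n}{\beta p+a}$ makes $\beta p+a-\tfrac{n}{l}<0$, so the $t$-integral over $(|x|/2,\infty)$ equals $C\,(|x|/2)^{(\beta p+a-n/l)/(p-1)}\to 0$. Adding the three pieces yields (\ref{decay}).

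The main obstacle is the bookkeeping of the exponent $l$: convergence of the Wolff integrand at $t=0$ demands $l$ large ($l>\frac{n}{\beta p}$), whereas convergence and the correct negative power of $|x|$ for moderate and large $t$ demand $l$ small ($l<\frac{n}{\beta p+a}$). When $a<0$ these ranges overlap and a single $l\in(\frac{n}{\beta p},\frac{n}{\beta p+a})$ suffices, each piece then carrying a strictly negative power of $|x|$; here (\ref{cond}) is precisely what guarantees $\max\{\frac{n}{\beta p},\ \frac{n(p-1)}{q(n-\beta p)}\}<\frac{n}{\beta p+a}$, so the admissible range of $l$ is nonempty. When $a=0$ the two requirements are incompatible, and one must use different exponents on the different pieces and genuinely exploit the vanishing tail $\varepsilon(|x|/2)\to0$ in place of a power of $|x|$. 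Verifying these exponent inequalities from (\ref{cond}) and (\ref{ASSUM}) is where the real work lies; the rest is the same Hölder-plus-Wolff machinery already used in the proof of Theorem \ref{boundth}.
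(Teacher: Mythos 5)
Your proof is correct, but it takes a genuinely different route from the paper's. You estimate the Wolff potential directly: splitting at $t=1$ and $t=|x|/2$, pulling out the weight via $|y|^a\le(|x|/2)^a$ on the ranges $t<|x|/2$, applying H\"older with exponents $l$ drawn from the integrability window of Theorem \ref{inter}, and letting either the vanishing tail $\varepsilon(|x|/2)$ or a negative power of $|x|$ kill each piece; the exponent bookkeeping you flag (different $l$ for $t<1$ versus $t\in(1,|x|/2)$ when $a=0$, a single $l\in(\frac{n}{\beta p},\frac{n}{\beta p+a})$ when $a<0$, plus $n+\frac{al}{l-1}>0$ for the far piece) does check out under (\ref{cond}), since (\ref{cond}) and $n>\beta p$ give $\frac{n(p-1)}{q(n-\beta p)}<\frac{n}{n+a}<\frac{n}{\beta p+a}$. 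The paper avoids all of this arithmetic: it splits at a small fixed radius $\delta$, bounds the near part $\int_0^\delta$ by $C\|u\|_\infty^{q/(p-1)}\delta^{(\beta p+a)/(p-1)}<\varepsilon$ using only boundedness (Theorem \ref{boundth}) and $\beta p+a>0$, bounds the far part $\int_\delta^\infty$ by $Cu(x)$ for every $x\in B_\delta(x_0)$ via the shifted-center comparison already derived in (\ref{tec}), and then concludes from $u(x_0)\le\varepsilon+Cu(x)$ by averaging $u^{s_0}$ over $B_\delta(x_0)$ and using that $\int_{B_\delta(x_0)}u^{s_0}dx\to0$ as $|x_0|\to\infty$ because $u\in L^{s_0}(R^n)$. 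The paper's argument is shorter, needs only $u\in L^\infty\cap L^{s_0}$, and recycles a step it already proved; yours is more computational but self-contained at the level of potential estimates, yields a quantitative power decay for the $t\ge|x|/2$ piece, and is essentially the machinery the paper deploys later anyway for the sharper rate in Theorem \ref{rateth2}.
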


\begin{proof}
Take $x_0 \in R^n$. By Theorem \ref{boundth},
$\|u\|_\infty<\infty$. Thus, $\forall \varepsilon>0$, there
exists $\delta \in (0,1)$ such that
$$
\int_0^{\delta}
[\frac{\int_{B_t(x_0)}|z|^a u^q(z)dz}{t^{n-\beta p}}]^{\frac{1}{p-1}}
\frac{dt}{t} \leq C\|u\|_\infty^{\frac{q}{p-1}}
\int_0^{\delta} t^{\frac{\beta p+a}{p-1}} \frac{dt}{t}
<\varepsilon.
$$
On the other hand, similar to the derivation of (\ref{tec}), as
$|x-x_0|<\delta$,
$$
\int_{\delta}^{\infty}
[\frac{\int_{B_t(x_0)}|z|^au^q(z)dz}{t^{n-\beta p}}]^{\frac{1}{p-1}}
\frac{dt}{t} \leq Cu(x).
$$
Combining these estimates, we get
$$
u(x_0)<\varepsilon+Cu(x), \quad for \quad |x-x_0|
<\delta.
$$
Since $u \in
L^{s_0}(R^n)$, there holds $\lim_{|x_0| \to
\infty}\int_{B_\delta(x_0)}u^{s_0}(x)dx=0$. Thus, we have
\begin{equation}
\begin{array}{ll}
u^{s_0}(x_0)
&=|B_\delta(x_0)|^{-1}\displaystyle\int_{B_\delta(x_0)}
u^{s_0}(x_0)dx\\[3mm] &\leq
C\varepsilon^{s_0}
+C|B_\delta(x_0)|^{-1}\displaystyle\int_{B_\delta(x_0)}u^{s_0}(x)dx
\to 0
\end{array}
\label{3.15}
\end{equation}
when $|x_0| \to \infty$ and $\varepsilon \to 0$.
Thus, (\ref{decay}) is proved.
\end{proof}

\begin{theorem} \label{rateth1}
Assume $u \in L^{s_0}(R^n)$ solves (\ref{IE}) with (\ref{ASSUM}). Then we can find $c>0$ such that
$u(x) \geq c |x|^{\frac{\beta p-n}{p-1}}$ when $|x| \to \infty$.
\end{theorem}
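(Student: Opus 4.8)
The plan is to extract the fast-rate lower bound directly from the structure of the Wolff potential, discarding most of the integration region and retaining only a contribution that can be bounded below uniformly in $x$. Since $R(x)$ is double bounded, (\ref{IE}) gives $u(x) \ge \frac{1}{C} W_{\beta,p}(|y|^a u^q)(x)$, so it suffices to bound the potential below. In fact this exact estimate has already been carried out as the chain (\ref{low}) in Step 5 of the proof of Theorem \ref{inter}, so the present statement is essentially a restatement of that computation; I would simply reproduce the argument in isolated form.

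First I would restrict the outer integral in the Wolff potential to the range $t \in [2|x|,4|x|]$ and, inside each ball $B_t(x)$, restrict the mass integral to the fixed annulus $B_2(0)\setminus B_1(0)$. The key geometric observation is that for $|x| \ge 2$ and $t \ge 2|x|$ one has, whenever $|y|\le 2$,
$$
|y-x| \le |y|+|x| \le 2+|x| \le 2|x| \le t,
$$
so $B_2(0)\setminus B_1(0) \subset B_t(x)$. Because $u>0$ is a genuine positive solution and $|y|^a$ is bounded below on this compact annulus (there $1\le|y|\le 2$ and $a\le 0$, so $|y|^a \ge 2^a>0$), the integrand is strictly positive on a set of fixed positive measure, and hence the mass $\int_{B_2(0)\setminus B_1(0)} |y|^a u^q(y)\,dy \ge c_0$ for some constant $c_0>0$ independent of $x$.

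Next I would carry out the elementary $t$-integration. Using monotonicity of the integrand together with the uniform lower bound $c_0$,
$$
u(x) \ge c \int_{2|x|}^{4|x|} \Big(\frac{c_0}{t^{n-\beta p}}\Big)^{\frac{1}{p-1}} \frac{dt}{t} = c \int_{2|x|}^{4|x|} t^{-\frac{n-\beta p}{p-1}} \frac{dt}{t}.
$$
Since $\beta p < n$ forces $\gamma:=\frac{n-\beta p}{p-1}>0$, evaluating $\int_{2|x|}^{4|x|} t^{-\gamma-1}\,dt = \frac{1}{\gamma}(2|x|)^{-\gamma}(1-2^{-\gamma})$ over this dyadic range yields exactly $c|x|^{-\gamma}=c|x|^{\frac{\beta p-n}{p-1}}$, which is the claimed bound.

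There is essentially no serious obstacle. The only points requiring care are the uniform positivity of the annular mass $c_0$, which rests solely on $u$ being a positive solution, and the geometric inclusion that places the fixed annulus inside every ball $B_t(x)$ with $t\ge 2|x|$; both are elementary. I would emphasize that this is precisely why the fast-rate lower bound holds for \emph{every} $L^{s_0}(R^n)$-solution, with no upper decay or integrability-threshold information needed beyond the mere positivity of $u$.
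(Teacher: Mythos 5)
Your proposal is correct and follows essentially the same route as the paper: both truncate the $t$-integral in the Wolff potential to a range where the fixed annulus $B_2(0)\setminus B_1(0)$ lies inside $B_t(x)$ (the paper uses $t\in[|x|+2,\infty)$, you use the dyadic range $[2|x|,4|x|]$, exactly as in the paper's own estimate (\ref{low})), bound the annular mass below by a positive constant using only the positivity of $u$, and integrate the resulting power of $t$. The differences are purely cosmetic, so nothing further is needed.
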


\begin{proof}
Clearly,
$\int_{B_2(0)\setminus B_1(0)}u^p(y)v^q(y)dy \geq c>0$. It
follows that
$$\begin{array}{ll}
u(x)
&\geq c\displaystyle\int_{|x|+2}^{\infty}
[\frac{\int_{B_2(0)\setminus B_1(0)}|y|^au^q(y)dy}{t^{n-\beta p}}]^{\frac{1}{p-1}}
\frac{dt}{t}\\[3mm]
&\geq c\displaystyle\int_{|x|+2}^{\infty}
t^{-\frac{n-\beta p}{p-1}}
\frac{dt}{t} \geq c|x|^{-\frac{n-\beta p}{p-1}}.
\end{array}
$$
Theorem \ref{rateth1} is proved.
\end{proof}

\begin{theorem} \label{rateth2}
Assume $u \in L^{s_0}(R^n)$ solves (\ref{IE}) with (\ref{ASSUM}). Then we can find $C>0$ such that
$u(x) \leq C|x|^{\frac{\beta p-n}{p-1}}$ when $|x| \to \infty$.
\end{theorem}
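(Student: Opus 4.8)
The plan is to establish the upper bound $u(x) \le C|x|^{-(n-\beta p)/(p-1)}$ by splitting the Wolff potential at the scale $t = |x|/2$, exactly as in the estimates of $W_1$ and $W_2$ in Remark 2.2, but now exploiting the integrability $u \in L^{s_0}(R^n)$ (hence $|y|^a u^q \in L^1$) together with the decay $u(x) \to 0$ from Theorem \ref{decayth} and the boundedness from Theorem \ref{boundth}. First I would write
$$
u(x) \le C R(x)\Big(\int_0^{|x|/2} + \int_{|x|/2}^\infty\Big)\Big[\frac{\int_{B_t(x)}|y|^a u^q(y)\,dy}{t^{n-\beta p}}\Big]^{\frac{1}{p-1}}\frac{dt}{t} =: C(W_1 + W_2),
$$
using that $R$ is double bounded.

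For the far part $W_2$ (where $t \ge |x|/2$), the ball $B_t(x)$ is contained in $B_{3t}(0)$, so the inner integral is bounded by $\int_{R^n}|y|^a u^q\,dy$, a finite constant since $u\in L^{s_0}$ forces $|y|^a u^q \in L^1(R^n)$ (this is the integrability one gets from Theorem \ref{inter} combined with the weight bound). Hence
$$
W_2 \le C\int_{|x|/2}^\infty \big(t^{-(n-\beta p)}\big)^{\frac{1}{p-1}}\frac{dt}{t} \le C|x|^{-\frac{n-\beta p}{p-1}},
$$
which is precisely the fast rate. For the near part $W_1$ (where $t < |x|/2$), I would use that on $B_t(x)$ one has $|y| \ge |x|/2$, so the weight $|y|^a \le C|x|^a$ (recall $a \le 0$), and since $u \to 0$ at infinity by Theorem \ref{decayth}, the factor $u^q(y)$ on this annular region is controlled by a quantity $o(1)$ as $|x|\to\infty$; more carefully, $\int_{B_t(x)}|y|^a u^q \le C|x|^a \big(\sup_{|y|\ge|x|/2} u(y)\big)^{q}\,t^n$, giving
$$
W_1 \le C|x|^{\frac{a}{p-1}}\big(\textstyle\sup_{|y|\ge|x|/2}u\big)^{\frac{q}{p-1}}\int_0^{|x|/2} t^{\frac{\beta p}{p-1}}\frac{dt}{t} \le C|x|^{\frac{\beta p+a}{p-1}}\,o(1).
$$
Since $u \in L^{s_0}$ rules out the slow rate and $(\beta p + a)/(p-1)$ is strictly smaller in absolute terms than the fast rate (as $q > p-1$), the decay factor $o(1)$ must in fact improve $W_1$ to the fast rate; the cleanest route is a bootstrap using the already-established lower bound of Theorem \ref{rateth1} to pin down the sup and then feed it back.

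The main obstacle I anticipate is the near part $W_1$: a crude bound only yields the slow rate $|x|^{-(\beta p+a)/(q-p+1)}$, and upgrading it to the fast rate requires genuinely using integrability rather than mere boundedness. I expect to handle this by an iteration argument in the spirit of Theorem \ref{th2.1} and Theorem \ref{th5.1}: suppose $u(x)\le C|x|^{-\gamma}$ for some $\gamma < (n-\beta p)/(p-1)$, substitute this into the estimate for $W_1$ to obtain a strictly larger exponent $\gamma' = (q\gamma - \beta p - a)/(p-1)$ or, when the integral is $L^1$-dominated, to obtain directly the fast rate from $W_2$; the integrability guarantees the iteration terminates at the fast rate rather than stalling at the slow one. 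Combining the two parts then gives $u(x) \le C|x|^{-(n-\beta p)/(p-1)}$, which together with Theorem \ref{rateth1} yields the exact fast decay asymptotics.
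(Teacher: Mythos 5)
Your treatment of the far part $W_2$ is correct and coincides with the paper's (the finiteness of $\int_{R^n}|y|^au^q(y)\,dy$ does follow from Theorems \ref{inter} and \ref{boundth}), but the near part $W_1$ hides a genuine gap: your iteration has no admissible starting point. The map $\gamma \mapsto \gamma' = \frac{q\gamma-\beta p-a}{p-1}$ satisfies $\gamma'>\gamma$ if and only if $\gamma>\frac{\beta p+a}{q-p+1}$; that is, it is expanding only \emph{strictly above} its fixed point, which is exactly the slow rate. So to launch the iteration you need a pointwise bound $u(x)\le C|x|^{-\gamma_0}$ with $\gamma_0$ strictly larger than the slow rate, and nothing you have established supplies any algebraic rate at all: Theorem \ref{boundth} gives boundedness, Theorem \ref{decayth} gives only $u(x)=o(1)$ with no rate, and $u\in L^{s_0}(R^n)$ gives no pointwise rate directly. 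Note also that with boundedness and the $o(1)$ factor alone, your estimate reads $W_1\le o(1)|x|^{\frac{\beta p+a}{p-1}}$, a quantity that \emph{grows} in $|x|$ since $\beta p+a>0$; contrary to what you wrote, the crude bound does not even yield the slow rate, so it cannot seed the iteration either. Finally, the lower bound of Theorem \ref{rateth1} cannot ``pin down the sup'': a lower bound never produces an upper bound.

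The missing idea is the paper's dichotomy on dyadic annuli. Set $h(x)=u(x)|x|^{(n+a)/q}\psi_\rho(x)$, where $\psi_\rho$ is a cutoff supported in $B_{2\rho}\setminus B_\rho$. Either (1) $h\le C$ uniformly in $\rho$, which gives the seed rate $u(x)\le C|x|^{-(n+a)/q}$; by (\ref{cond}) this exponent lies strictly above the slow rate, and in fact a \emph{single} pass through your $W_1/W_2$ splitting (using $u^q(y)\le C|x|^{-(n+a)}$ and $\int_{B_t(x)}|y|^a dy\le Ct^{n+a}$ for $t<|x|/2$) already yields the fast rate, because $\frac{1}{p-1}\bigl(q\cdot\frac{n+a}{q}-\beta p-a\bigr)=\frac{n-\beta p}{p-1}$ --- no infinite iteration is needed. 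Or (2) $h(x_{\rho_j})\to\infty$ along maximum points $x_{\rho_j}$ of the annuli; ruling this out is the real core of the paper's proof and uses an absorption argument absent from your proposal: near the maximum point one has the Harnack-type bound $u(y)\le Cu(x_\rho)$, so writing $u^q=u^{p-1}u^{q-p+1}$ one pulls the factor $u(x_\rho)$ out of the near part of the Wolff potential and shows the remaining coefficient is at most $C\varepsilon$ (small for small $t$ by Theorem \ref{decayth}, and for the intermediate range by H\"older together with Theorem \ref{inter}); then $u(x_\rho)\le C\varepsilon u(x_\rho)+C|x_\rho|^{-\frac{n-\beta p}{p-1}}$, and absorbing the first term gives the fast rate at $x_\rho$, contradicting $h(x_{\rho_j})\to\infty$. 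Without case (2), or some substitute producing an initial rate strictly better than the slow one, your argument cannot close.
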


\begin{proof}
Take a cutting-off function
$\psi(x) \in C_0^{\infty}(B_{2} \setminus B_1)$
satisfying
$$
0 \leq \psi(x) \leq 1, \quad for \quad 1 \leq |x| \leq 2;
$$
$$
\psi(x)=1, \quad for \quad \frac{5}{4} \leq |x| \leq \frac{7}{4}.
$$
For any $\rho>0$, set $\psi_\rho(x)=\psi(\frac{x}{\rho})$. Define
$$
h(x)=u(x)|x|^{(n+a)/q}\psi_\rho(x).
$$
Then, one of the following two cases holds:

(1) There exists a positive constant
$C$ (independent of $\rho$) such that
\begin{equation}
h(x) \leq C, \quad \forall x; \label{3.3*}
\end{equation}

(2) There exists an increasing
sequence $\{\rho_j\}_{j=1}^{\infty}$ satisfying
$
\lim_{j \to \infty}\rho_j=\infty,
$
such that as $x_{\rho_j} \in B_{2\rho_j}\setminus B_{\rho_j}$,
\begin{equation}
\lim_{j \to \infty}h(x_{\rho_j})=\infty. \label{3.4}
\end{equation}

{\it Step 1.} If (\ref{3.3*}) is true, then for large $|x|$,
\begin{equation}
u(x) \leq C|x|^{-(n+a)/q}. \label{3.5}
\end{equation}
When $t \in (0,|x|/2)$, $y \in B_t(x)$ implies $|x|/2 \leq |y|
\leq 3|x|/2$, which leads to $u^q(y) \leq C|x|^{-(n+a)}$. In addition,
$|y| \geq |x|/2 \geq |y-x|$ leads to
$$
\int_{B_t(x)}|y|^ady \leq \int_{B_t(x)}|y-x|^ady \leq Ct^{n+a}.
$$
Thus, we have
\begin{equation} \label{3.6}
\int_0^{\frac{|x|}{2}}
(\frac{\int_{B_t(x)}|y|^au^{q}(y)dy}{t^{n-\beta
p}})^{\frac{1}{p-1}} \frac{dt}{t}\leq
\displaystyle\frac{C}{|x|^{\frac{n+a}{p-1}}}
\int_0^{\frac{|x|}{2}} t^{\frac{\beta p+a}{p-1}} \frac{dt}{t} \leq
\frac{C}{|x|^{\frac{n-\beta p}{p-1}}}.
\end{equation}

On the other hand, Theorem \ref{boundth} and $n+a>0$ imply
$$
\int_{B_1(0)}|y|^au^q(y)dy \leq C\|u\|_\infty^q \int_{B_1(0)}|y|^ady<\infty.
$$
Noting (\ref{cond}), by the H\"older inequality and Theorem \ref{inter}, we get
$$
\int_{R^n \setminus B_1(0)}|y|^au^q(y)dy \leq C\|u^q\|_{k'}
(\int_{R^n\setminus B_1(0)}|y|^{ak}dy)^{1/k}<\infty,
$$
where $\frac{1}{k}=\frac{-\epsilon-a}{n}$ and $\frac{1}{k'}=1-\frac{1}{k}$
with $\epsilon>0$ sufficiently small. Combining two estimates above yields
\begin{equation} \label{time9}
\int_{R^n}|y|^au^q(y)dy<\infty.
\end{equation}
Then
$$
\int_{|x|/2}^{\infty}
(\frac{\int_{B_t(x)}|y|^au^{q}(y)dy}{t^{n-\beta p}})^{\frac{1}{p-1}}
\frac{dt}{t} \leq C\int_{|x|/2}^{\infty}
t^{\frac{\beta p-n}{p-1}} \frac{dt}{t} \leq
C|x|^{-\frac{n-\beta p}{p-1}}.
$$
Combining this result with (\ref{3.6}), we
obtain
$$
u(x)=R(x)W_{\beta,p}(u^{q})(x) \leq
C|x|^{-\frac{n-\beta p}{p-1}}.
$$
Theorem \ref{rateth2} is proved in the case of (1).

{\it Step 2.} We prove case (2) does not happen.

Let $x_\rho$ be the maximum point of $h(x)$ in
$B_{2\rho}\setminus B_{\rho}$. It follows from (\ref{3.4}) that
\begin{equation}
u(x_{\rho_j})=\frac{h(x_{\rho_j})}{\psi_{\rho_j}(x_{\rho_j})|x_{\rho_j}|^{n/q}}
\geq \frac{c}{\rho_j^{n/q}}. \label{3.7}
\end{equation}
For convenience, we denote $\rho_j$ by $\rho$.

We also obtain that $\psi_\rho(x_\rho)>\delta$
for some $\delta>0$ (independent of $\rho$).
The details of the proof can be seen in \cite{LeiLi}.
Therefore, by the smoothness of $\psi$, we can find a suitably small positive
constant $\sigma \in (0,1/2)$, such that $ \psi_\rho(y)>\delta/2 $ for
$|y-x_\rho|<\sigma|x_\rho|$. Hence, by $h(y) \leq h(x_\rho)$, we get
\begin{equation}
u(y) \leq C\frac{u(x_\rho)}{\psi_\rho(y)} \leq
C(\delta) u(x_\rho), \quad as \quad |y-x_\rho|<\sigma|x_\rho|.
\label{3.10}
\end{equation}

Clearly,
\begin{equation}
\begin{array}{ll}
u(x_\rho) &\leq C[\displaystyle\int_0^{\sigma|x_\rho|}
(\frac{\int_{B_t(x_\rho)}|y|^au^{q}(y)dy}{t^{n-\beta p}})^{\frac{1}{p-1}}
\frac{dt}{t}\\[3mm]
&\quad +\displaystyle\int_{\sigma|x_\rho|}^{\infty}
(\frac{\int_{B_t(x_\rho)}|y|^au^{q}(y)dy}{t^{n-\beta p}})^{\frac{1}{p-1}}
\frac{dt}{t}]\\[3mm]
&:=C(J_1+J_2).
\end{array}
\label{3.11}
\end{equation}
From (\ref{time9}), it follows
\begin{equation}
J_2 \leq C\int_{\sigma|x_\rho|}^{\infty}
t^{-\frac{n-\beta p}{p-1}} \frac{dt}{t} \leq
C|x_\rho|^{-\frac{n-\beta p}{p-1}}. \label{3.12}
\end{equation}
Using (\ref{3.10}), we obtain that, for $r \in
(0,\sigma|x_\rho|)$,
\begin{equation}
\begin{array}{ll}
J_1 &\leq Cu(x_\rho)[\displaystyle\int_0^r
(\frac{\int_{B_t(x_\rho)}|y|^au^{q-p+1}(y)dy}{t^{n-\beta p}})^{\frac{1}{p-1}}
\frac{dt}{t}\\[3mm]
&\quad +\displaystyle\int_r^{\sigma|x_\rho|}
(\frac{\int_{B_t(x_\rho)}|y|^au^{q-p+1}(y)dy}{t^{n-\beta p}})^{\frac{1}{p-1}}
\frac{dt}{t}]\\[3mm]
&:=Cu(x_\rho)(J_{11}+J_{12}).
\end{array}
\label{3.13}
\end{equation}
In view of $\sigma \in (0,1/2)$, $|y|^a \leq C|x_\rho|^a$.
According to Theorem \ref{decayth}, for any $\varepsilon \in (0,1)$, there holds
$$
J_{11} \leq C\|w\|_{L^\infty(B_{\sigma|x_\rho|}(x_\rho))}^{\frac{q-p+1}{p-1}}
|x_\rho|^{\frac{a}{p-1}}\int_0^r t^{\frac{\beta p}{p-1}} \frac{dt}{t}
\leq C\varepsilon |x_\rho|^{\frac{a}{p-1}}
$$
as long as $\rho$ is sufficiently large. On the other hand, by
H\"older's inequality and Theorem 3.1,
$$
\int_{B_t(x_\rho)}|y|^au^{q-p+1}(y)dy \leq
\|u^{q-p+1}\|_{k'} (\int_{B_t(x_\rho)}|y|^{ak} dy)^{\frac{1}{k}}
\leq Ct^{n/k+a},
$$
where $\frac{1}{(q-p+1)k'}=\frac{n-\beta p-\epsilon}{n(p-1)}$
with $\epsilon>0$ sufficiently small.
Hence,
$$
J_{12} \leq C\int_r^{\sigma|x_\rho|}
t^{[\beta p-\frac{q-p+1}{p-1}(n-p\beta-\epsilon)+a]/(p-1)} \frac{dt}{t}.
$$
By virtue of (\ref{cond}), $\beta p-\frac{q-p+1}{p-1}(n-p\beta-\epsilon)+a<0$
as long as $\epsilon$ is sufficiently small. Therefore,
if $\rho$ is sufficiently large and $r$ is chosen suitably large,
then
$$
J_{12} \leq \varepsilon.
$$

Substituting the estimates of $J_{11}$ and $J_{12}$ into (\ref{3.13}),
we obtain
$$
J_1 \leq C\varepsilon u(x_\rho)
$$
when $\rho$ is sufficiently large. Inserting this result and
(\ref{3.12}) into (\ref{3.11}), and choosing $\varepsilon$ sufficiently
small, we get
$$
u(x_\rho) \leq C|x_\rho|^{-\frac{n-\beta p}{p-1}}.
$$
By (\ref{3.10}), we obtain that as $|x-x_\rho|<\sigma|x_\rho|$,
$$
u(x) \leq Cu(x_\rho) \leq
C|x_\rho|^{-\frac{n-\beta p}{p-1}} \leq
C|x|^{-\frac{n-\beta p}{p-1}}.
$$
Since $\rho$ is arbitrary, the result above still holds for all
$x$ as long as $|x|$ is large. This result contradicts (\ref{3.4})
if we notice (\ref{cond}).
Thus, case (2) does not happen.
\end{proof}

\paragraph{Proof of Theorem \ref{th1.2}.}

By the argument in section 2, we only need to prove item 1 of Theorem \ref{th1.2}.

By Theorems \ref{boundth}, \ref{rateth1} and \ref{rateth2}, we see that if
$u \in L^{s_0}(R^n)$, then $u$ is bounded and decays with the fast rate.
On the contrary, if $u$ is bounded and decays with the fast rate, we have
$$
\int_{R^n}u^{s_0}dx =\int_{B_R(0)}u^{s_0}dx +\int_{R^n\setminus B_R(0)}u^{s_0}dx
\leq C+C\int_R^\infty r^{n-\frac{n-p\beta}{p-1}\frac{n(q-p+1)}{p\beta+a}}\frac{dr}{r}.
$$
Noting (\ref{cond}), we see that $u \in L^{s_0}(R^n)$.

\section{Results on PDE}

\subsection{Integral equation}

\begin{theorem} \label{equiva}
Let $u$ be a positive weak solution of (\ref{1}) satisfying
$\inf_{R^n}u=0$. Then there exists a positive function $R(x)$ such
that
\begin{equation}
u(x)=R(x)W_{1,p}(|y|^au^q(y))(x), \quad in \quad R^n. \label{3.1}
\end{equation}
Moreover, there exist positive constants $C_i, (i=1,2)$ such that
\begin{equation}
C_1 \leq R(x) \leq C_2. \label{3.2}
\end{equation}
\end{theorem}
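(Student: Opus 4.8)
The plan is to establish the pointwise equivalence \eqref{3.1}--\eqref{3.2} by exploiting the theory of $\mathcal{A}$-superharmonic functions, for which the Wolff potential gives a two-sided pointwise control. The key analytic input is a classical result of Kilpel\"ainen--Mal\'y (see \cite{KM}, and also \cite{PV}): if $w$ is a nonnegative $\mathcal{A}$-superharmonic function on $R^n$ whose associated Riesz measure is $\nu$, then there exist constants $K_1, K_2>0$, depending only on $n$, $p$ and the structural constants $\mu_1,\mu_2$ in \eqref{2}, such that
\begin{equation} \label{plan-wolff}
K_1\, W_{1,p}(\nu)(x) \leq w(x)-\inf_{R^n} w \leq K_2\, W_{1,p}(\nu)(x), \quad \forall x \in R^n.
\end{equation}
First I would verify that the positive weak solution $u$ of \eqref{1}, under the structure hypotheses \eqref{2}, is indeed $\mathcal{A}$-superharmonic. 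This follows because $u \in W^{1,p}_{loc}(R^n)\cap C(R^n)$ and $-\mathrm{div}\,A(x,\nabla u)=|x|^a u^q \geq 0$ in the weak sense \eqref{weaksolution}; the right-hand side is a nonnegative locally integrable function, so $u$ is an $\mathcal{A}$-supersolution, and its lower-semicontinuous regularization (which equals $u$ by continuity) is $\mathcal{A}$-superharmonic in the sense of \cite{KM}.

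Next I would identify the Riesz measure $\nu$ of $u$. By the weak formulation \eqref{weaksolution}, the distribution $-\mathrm{div}\,A(x,\nabla u)$ is exactly the measure with density $|x|^a u^q(x)$, i.e. $d\nu=|y|^a u^q(y)\,dy$. Substituting this measure into \eqref{plan-wolff} and using the hypothesis $\inf_{R^n} u=0$ gives
\begin{equation} \label{plan-sandwich}
K_1\, W_{1,p}(|y|^a u^q(y))(x) \leq u(x) \leq K_2\, W_{1,p}(|y|^a u^q(y))(x)
\end{equation}
for all $x\in R^n$. Since $u>0$ and $u$ is $\mathcal{A}$-superharmonic (hence not identically zero, so the potential is positive and finite wherever $u$ is finite), the Wolff potential $W_{1,p}(|y|^a u^q(y))(x)$ is strictly positive everywhere. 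I can therefore define
\[
R(x):=\frac{u(x)}{W_{1,p}(|y|^a u^q(y))(x)},
\]
which is well defined and yields \eqref{3.1} by construction.

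Finally, the double-boundedness \eqref{3.2} is immediate from the sandwich \eqref{plan-sandwich}: dividing through by the (positive) Wolff potential gives $K_1 \leq R(x) \leq K_2$, so one takes $C_1=K_1$ and $C_2=K_2$. The main obstacle, and the step that requires the most care, is the rigorous justification that the structure conditions \eqref{2} place $u$ within the precise class of $\mathcal{A}$-superharmonic functions for which the Kilpel\"ainen--Mal\'y estimate \eqref{plan-wolff} is valid, with constants depending only on $\mu_1,\mu_2,n,p$ and not on $u$ itself; in particular one must check that the homogeneity and monotonicity conditions in \eqref{2} guarantee the equivalence constants are uniform. A secondary technical point is confirming that $W_{1,p}(|y|^a u^q(y))(x)<\infty$ for every $x$ (equivalently that $u(x)<\infty$ everywhere, which holds by continuity), so that the quotient defining $R(x)$ never becomes $0/0$ or involves an infinite denominator. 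Once these membership and finiteness facts are secured, the theorem reduces entirely to quoting \eqref{plan-wolff} and rearranging.
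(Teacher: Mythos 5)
Your proposal is correct and follows essentially the same route as the paper: the paper likewise notes that $u$ is $\mathcal{A}$-superharmonic, invokes Corollary 4.13 of \cite{KM} (the global Kilpel\"ainen--Mal\'y Wolff-potential estimate, applicable precisely because $\inf_{R^n}u=0$) to sandwich $u$ between constant multiples of the Wolff potential of its Riesz measure $|y|^au^q(y)\,dy$, and then defines $R(x)$ as the quotient of $u$ by that potential. The only difference is one of detail: you explicitly verify the $\mathcal{A}$-superharmonicity of $u$, the identification of the Riesz measure from the weak formulation, and the positivity/finiteness of the potential, all of which the paper leaves implicit.
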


\begin{proof}
Since $u$ is a positive weak solution of (\ref{1}), it is a
$\mathcal{A}$-superharmonic function. According to Corollary 4.13
in \cite{KM}, by $\inf_{R^n}u=0$ we can find two positive
constants $C_1$ and $C_2$ such that
\begin{equation}
C_1W_{1,p}(u^p)(x) \leq u(x) \leq C_2W_{1,p}(u^p)(x),
\quad x \in R^n. \label{3.3}
\end{equation}
Set
$$
R(x)=\frac{u(x)}{W_{1,p}(u^p)(x)}.
$$
Then the solution $u$ of (\ref{PDE}) satisfies (\ref{3.1}). At
the same time, (\ref{3.3}) leads to (\ref{3.2}).
\end{proof}

As a corollary of Theorems \ref{th1.2} and \ref{equiva},
we can see the following results.

\begin{theorem} \label{tiant}
If $q \in (0,\frac{(n+a)(p-1)}{n-p}]$, (\ref{1}) has no positive
solution. Let $u$ be a positive weak solution of (\ref{1}) with
(\ref{ASSUM}). Then

(1) $u$ is bounded and decays with the fast rate $\frac{n-p}{p-1}$
if and only if $u \in L^{s_0}(R^n)$ with $\beta=1$.

(2) If $u \in L^\infty(R^n) \setminus L^{s_0}(R^n)$ decays with some rate, then the
rate must be the slow one $\frac{p+a}{q-p+1}$.
\end{theorem}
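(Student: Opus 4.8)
The plan is to reduce every assertion to the integral equation (\ref{IE}) in the case $\beta=1$ and then simply transcribe Theorem \ref{th1.2}. Since $u$ is a positive weak solution of (\ref{1}), it is $\mathcal{A}$-superharmonic, so Theorem \ref{equiva} furnishes a double bounded function $R(x)$ (with $C_1\leq R(x)\leq C_2$) for which
$$
u(x)=R(x)W_{1,p}(|y|^au^q(y))(x),\quad x\in R^n.
$$
This is exactly (\ref{IE}) with $\beta=1$, and for this value one has $n-p\beta=n-p$, $p\beta+a=p+a$, $s_0=\frac{n(q-p+1)}{p+a}$, while the fast and slow rates $\frac{n-p\beta}{p-1}$ and $\frac{p\beta+a}{q-p+1}$ become $\frac{n-p}{p-1}$ and $\frac{p+a}{q-p+1}$. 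Thus, once this reduction is in place, the three assertions of the theorem are precisely the three assertions of Theorem \ref{th1.2} read with $\beta=1$.

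For the nonexistence claim I would argue by contradiction: if (\ref{1}) had a positive solution for some $q\in(0,\frac{(n+a)(p-1)}{n-p}]$, the reduction above would produce a positive solution of (\ref{IE}) with $\beta=1$ and some double bounded $R$. But the nonexistence part of Theorem \ref{th1.2} (equivalently Theorem \ref{th2.1} with $\beta=1$, whose forbidden range is exactly $q\in(0,\frac{(n+a)(p-1)}{n-p}]$) rules this out, a contradiction. Items (1) and (2) then follow immediately: item 1 of Theorem \ref{th1.2} gives that $u$ is bounded and decays with rate $\frac{n-p}{p-1}$ if and only if $u\in L^{s_0}(R^n)$, and item 2 gives that a bounded non-integrable solution, if it decays at all, must do so with the slow rate $\frac{p+a}{q-p+1}$.

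The only point that genuinely needs attention is the hypothesis $\inf_{R^n}u=0$ required to invoke Theorem \ref{equiva}; this is the standing assumption on positive weak solutions (recorded in the abstract), but I expect it also to be the main obstacle and would justify it as follows. Suppose instead $\inf_{R^n}u=m>0$. Then the Riesz measure of the $\mathcal{A}$-superharmonic function $u$ satisfies $\mu=|y|^au^q\,dy\geq m^q|y|^a\,dy$, and the Kilpel\"ainen--Mal\'y lower bound for $\mathcal{A}$-superharmonic functions, after letting the truncation radius tend to infinity, yields $u(x)\geq c\,W_{1,p}(\mu)(x)\geq c\,m^{q/(p-1)}W_{1,p}(|y|^a)(x)$. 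Because $-a<p$ by (\ref{ASSUM}) with $\beta=1$, the tail of $W_{1,p}(|y|^a)$ behaves like $\int^{\infty}t^{(a+1)/(p-1)}\,dt$ with exponent $(a+1)/(p-1)>-1$, hence diverges; this forces $u\equiv\infty$, which is impossible. Therefore $\inf_{R^n}u=0$, the reduction to (\ref{IE}) is legitimate, and the rest of the proof is a direct reading of Theorem \ref{th1.2}.
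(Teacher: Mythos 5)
Your proposal is correct and takes essentially the same route as the paper: the paper proves Theorem \ref{tiant} exactly as you do, as an immediate corollary of Theorem \ref{equiva} (reduction to the integral equation (\ref{IE}) with $\beta=1$ and a double bounded $R(x)$) together with Theorem \ref{th1.2}. Your extra verification that $\inf_{R^n}u=0$ holds automatically --- via the Kilpel\"ainen--Mal\'y lower Wolff bound $u(x)\geq c\,W_{1,p}(\mu)(x)$ and the divergence of $W_{1,p}(|y|^a)$ when $a>-p$ --- is sound and in fact supplies a point the paper merely carries as a standing assumption.
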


In particular, for the positive solutions of $p$-Laplace equation
(\ref{PDE}), we also have the same conclusions.

\subsection{Finite energy solutions and integrable solutions}

The following theorem shows that the integrable solutions
satisfy the invariant property of the system and the norm under
the scaling $u_\lambda(x)=\lambda^\theta u(\lambda x)$ with
$\lambda>0$.

\begin{theorem} \label{prop6.1}
Assume the scaling $u_\lambda(x)$ is still a solution of
(\ref{IE}) with some new double bounded function $R(x)$, if and
only if $\theta=\frac{p\beta+a}{q-p+1}$. Moreover,
$\|u_\lambda\|_\eta=\|u\|_\eta$ if and only if $\eta=s_0$.
\end{theorem}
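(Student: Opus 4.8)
The plan is to compute directly how the Wolff potential on the right-hand side of (\ref{IE}) transforms under the rescaling $u_\lambda(x)=\lambda^\theta u(\lambda x)$, and then to read off both equivalences by matching powers of $\lambda$.

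First I would track the scaling of the inner ball-average. Writing $|y|^a u_\lambda^q(y)=\lambda^{\theta q}|y|^a u^q(\lambda y)$ and substituting $z=\lambda y$ (so that $B_t(x)$ maps to $B_{\lambda t}(\lambda x)$ and $dy=\lambda^{-n}dz$), one gets
$$\int_{B_t(x)}|y|^a u_\lambda^q(y)\,dy=\lambda^{\theta q-a-n}\int_{B_{\lambda t}(\lambda x)}|z|^a u^q(z)\,dz.$$
Inserting this into $W_{\beta,p}$ and substituting $s=\lambda t$ in the outer integral (which leaves $dt/t=ds/s$ invariant and contributes a factor $\lambda^{\,n-p\beta}$ from the denominator $t^{n-p\beta}$), the homogeneity of the exponent $1/(p-1)$ collects all powers of $\lambda$ into a single prefactor,
$$W_{\beta,p}(|y|^a u_\lambda^q)(x)=\lambda^{\frac{\theta q-p\beta-a}{p-1}}\,W_{\beta,p}(|z|^a u^q)(\lambda x).$$

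Next, since $u$ solves (\ref{IE}), I may replace $W_{\beta,p}(|z|^a u^q)(\lambda x)=u(\lambda x)/R(\lambda x)$. Combining with $u_\lambda(x)=\lambda^\theta u(\lambda x)$ shows that $u_\lambda$ satisfies (\ref{IE}) with the new coefficient
$$R_\lambda(x):=\frac{u_\lambda(x)}{W_{\beta,p}(|y|^a u_\lambda^q)(x)}=\lambda^{\,\theta-\frac{\theta q-p\beta-a}{p-1}}\,R(\lambda x)=\lambda^{\frac{(p\beta+a)-\theta(q-p+1)}{p-1}}\,R(\lambda x).$$
Because $R$ is double bounded, $R_\lambda$ inherits double bounds with constants independent of $\lambda$ precisely when the power of $\lambda$ vanishes, i.e. when $(p\beta+a)-\theta(q-p+1)=0$; this is exactly the slow rate $\theta=\frac{p\beta+a}{q-p+1}$. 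Conversely, if $\theta\neq\frac{p\beta+a}{q-p+1}$ then $\lambda^{\frac{(p\beta+a)-\theta(q-p+1)}{p-1}}$ is unbounded as $\lambda\to0$ or $\lambda\to\infty$, so no $\lambda$-uniform double bound can survive. This establishes the first equivalence. For the norm claim I would compute by the change of variables $z=\lambda x$,
$$\|u_\lambda\|_\eta^\eta=\lambda^{\theta\eta}\int_{R^n}|u(\lambda x)|^\eta\,dx=\lambda^{\theta\eta-n}\|u\|_\eta^\eta,$$
so that $\|u_\lambda\|_\eta=\|u\|_\eta$ for all $\lambda$ if and only if $\theta\eta=n$, i.e. $\eta=n/\theta$; substituting the slow rate gives $\eta=\frac{n(q-p+1)}{p\beta+a}=s_0$.

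I expect the only delicate point to be the bookkeeping in the Wolff-potential change of variables — in particular checking that the substitution $s=\lambda t$ restores $B_{\lambda t}(\lambda x)$ to $B_s(\lambda x)$ and preserves the scale-invariant measure $dt/t$ — together with pinning down the precise sense in which $R_\lambda$ must be \emph{double bounded}. For a single fixed $\lambda$ the prefactor $\lambda^{\frac{(p\beta+a)-\theta(q-p+1)}{p-1}}$ is a positive constant, so $R_\lambda$ is trivially double bounded for \emph{every} $\theta$; the statement only becomes nontrivial, and the ``only if'' direction only has content, when the bounding constants are required to be uniform in $\lambda$. With that reading fixed, both equivalences reduce to the vanishing of the two respective exponents in $\lambda$.
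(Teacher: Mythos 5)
Your proposal is correct and follows essentially the same route as the paper: a change of variables inside the Wolff potential that collects all powers of $\lambda$ into the single prefactor $\lambda^{\frac{(p\beta+a)-\theta(q-p+1)}{p-1}}$, whose exponent must vanish to give the slow rate $\theta=\frac{p\beta+a}{q-p+1}$, followed by the identical norm computation $\|u_\lambda\|_\eta^\eta=\lambda^{\theta\eta-n}\|u\|_\eta^\eta$ yielding $\eta=n/\theta=s_0$. Your closing observation---that for a single fixed $\lambda$ the prefactor is a harmless constant, so the ``only if'' direction has content only when the double bounds are required to be uniform in $\lambda$---is a point of precision that the paper's own proof glosses over.
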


\paragraph{Remark 4.1.} Let $a=0$. If
$q$ is equal to the critical exponent $p^*-1$, then $\eta=q+1$.

\begin{proof}
Clearly,
$$\begin{array}{ll}
u_\lambda(x)&=R(\lambda x)\lambda^\theta
\displaystyle\int_0^\infty(\frac{\int_{B_t(\lambda
x)}|y|^au^q(y)dy}
{t^{n-p\beta}})^{\frac{1}{p-1}}\frac{dt}{t}\\[3mm]
&=R(\lambda x)\lambda^\theta
\displaystyle\int_0^\infty(\frac{\int_{B_{t/\lambda}(\lambda x)}
|\lambda y|^au^q(\lambda y)\lambda^ndy}
{t^{n-p\beta}})^{\frac{1}{p-1}}\frac{dt}{t}\\[3mm]
&=R(\lambda x)\lambda^{\theta+\frac{p\beta+a-q\theta}{p-1}}
\displaystyle\int_0^\infty(\frac{\int_{B_{s}(x)}|z|^au_\lambda^q(z)dz}
{s^{n-p\beta}})^{\frac{1}{p-1}}\frac{ds}{s}.
\end{array}
$$
Here $R(\lambda x)$ is a new double bounded function.
$u_\lambda(x)$ is still a weak solution of (\ref{IE}), if and only
if the exponent of $\lambda$ is equal to zero. Clearly,
$\theta+\frac{p\beta+a-q\theta}{p-1}=0$ implies
$\theta=\frac{p\beta+a}{q-p+1}$.

In addition,
$$
\int_{R^n}u_\lambda^\eta(x)dx
=\lambda^{\eta\theta-n}\int_{R^n}u^\eta(z)dz.
$$
The energy is invariant if and only if $\eta=\frac{n}{\theta}$.
Inserting the value of $\theta$ we obtain
$\eta=\frac{n(q-p+1)}{p\beta+a}$.
\end{proof}

According to Theorem \ref{equiva}, the positive weak solution $u$
of (\ref{1}) also solves (\ref{IE}). Hence, the result above holds
for (\ref{1}). In particular, the corresponding result is still
true for (\ref{PDE}). In fact, we can see it by a formal
calculation. If denoting $y=\lambda x$, then we have
$$\begin{array}{ll}
&\quad |x|^au_\lambda^q(x)=\lambda^{q\theta-a}|y|^au(y)\\[3mm]
&=-\lambda^{q\theta-a}div_y[|\nabla_yu(y)|^{p-2}\nabla_yu(y)]\\[3mm]
&=-\lambda^{q\theta-a-(\theta+1)(p-1)-1}
div_x[|\nabla_xu(x)|^{p-2}\nabla_xu(x)].
\end{array}
$$
Therefore, $u_\lambda$ solves (\ref{PDE}) if and only if
$q\theta-a-(\theta+1)(p-1)-1=0$, which implies
$\theta=\frac{p+a}{q-p+1}$. On the other hand, $\|u_\lambda\|_\eta
=\|u\|_\eta$ if and only if
$\eta=\frac{n}{\theta}=\frac{n(q-p+1)}{p+a}$.

\vskip 5mm

By the following theorem, we can introduce another
solution--the finite energy solution.

\begin{theorem} \label{prop6.2}
Let $u \in L^{p^*}(R^n)$ be a weak solution of (\ref{1}). Then
$\nabla u \in L^p(R^n)$ if and only if $|x|^au^{q+1}(x) \in
L^1(R^n)$. Here $p^*=\frac{np}{n-p}$.
\end{theorem}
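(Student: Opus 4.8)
The plan is to reduce everything to a single identity obtained by testing the weak formulation (\ref{weaksolution}) against a cut-off version of $u$ itself. Formally, the choice $\zeta=u$ would give $\int A(x,\nabla u)\cdot\nabla u\,dx=\int|x|^au^{q+1}\,dx$, and the structure condition (\ref{2}) makes $A(x,\nabla u)\cdot\nabla u$ comparable to $|\nabla u|^p$ (from below, $A(x,\xi)\cdot\xi\geq\mu_1|\xi|^p$; from above, $A(x,\xi)\cdot\xi\leq|A(x,\xi)||\xi|\leq\mu_2|\xi|^p$), so the equivalence would be immediate. Since $u$ is neither compactly supported nor a priori in $W^{1,p}(R^n)$, I would instead take cut-offs $\phi_R\in C_0^\infty(B_{2R})$ with $\phi_R\equiv1$ on $B_R$, $0\leq\phi_R\leq1$ and $|\nabla\phi_R|\leq C/R$, and test with $\zeta=u\phi_R^p$. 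Expanding $\nabla(u\phi_R^p)$ yields
\begin{equation}
\int \phi_R^p\,A(x,\nabla u)\cdot\nabla u\,dx
=\int |x|^a u^{q+1}\phi_R^p\,dx
-p\int u\,\phi_R^{p-1}A(x,\nabla u)\cdot\nabla\phi_R\,dx. \label{plan-id}
\end{equation}

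The crucial estimate, and the only place where the hypothesis $u\in L^{p^*}$ with the exact Sobolev exponent $p^*=\frac{np}{n-p}$ enters, is that the cut-off error is negligible. Applying H\"older's inequality on the annulus $B_{2R}\setminus B_R$ (where $\nabla\phi_R$ is supported), using $p/p^*=(n-p)/n$ and $|B_{2R}\setminus B_R|\leq CR^n$, I obtain $\int u^p|\nabla\phi_R|^p\,dx\leq C(\int_{B_{2R}\setminus B_R}u^{p^*})^{(n-p)/n}$, whose right-hand side tends to $0$ as $R\to\infty$ because $u\in L^{p^*}(R^n)$ forces the tail integral to vanish. The factor $R^{-p}$ from $|\nabla\phi_R|^p$ cancels exactly against the $R^p$ coming from the volume of the annulus, and it is precisely this cancellation that pins down $p^*$.

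With (\ref{plan-id}) and the vanishing-error fact in hand, both implications follow symmetrically. If $|x|^au^{q+1}\in L^1(R^n)$, I bound the last term of (\ref{plan-id}) by $p\mu_2\int u\phi_R^{p-1}|\nabla u|^{p-1}|\nabla\phi_R|$ and use Young's inequality with exponents $p'$ and $p$ to absorb $\varepsilon\int\phi_R^p|\nabla u|^p$ into the left side (via $A(x,\nabla u)\cdot\nabla u\geq\mu_1|\nabla u|^p$), leaving a remainder $C(\varepsilon)\int u^p|\nabla\phi_R|^p=o(1)$; taking $\varepsilon=\mu_1/2$ and $R\to\infty$ by monotone convergence gives $\frac{\mu_1}{2}\int_{R^n}|\nabla u|^p\leq\int_{R^n}|x|^au^{q+1}<\infty$. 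Conversely, if $\nabla u\in L^p(R^n)$, then $\int\phi_R^pA(x,\nabla u)\cdot\nabla u\leq\mu_2\int_{R^n}|\nabla u|^p$ and the cut-off term is again $o(1)$ by H\"older and the same vanishing estimate, so Fatou's lemma yields $\int_{R^n}|x|^au^{q+1}\leq\mu_2\int_{R^n}|\nabla u|^p<\infty$.

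The main obstacle I expect is purely the admissibility of $\zeta=u\phi_R^p$, since (\ref{weaksolution}) is stated only for $\zeta\in C_0^\infty$. I would settle this by density: because $u\in C(R^n)\cap W^{1,p}_{loc}(R^n)$, the function $u\phi_R^p$ lies in $W^{1,p}(R^n)$ with compact support; the growth bound $|A(x,\nabla u)|\leq\mu_2|\nabla u|^{p-1}$ gives $A(x,\nabla u)\in L^{p'}_{loc}(R^n)$, and the condition $a>-n$ together with local boundedness of the continuous function $u$ gives $|x|^au^q\in L^1_{loc}(R^n)$. Hence both sides of (\ref{weaksolution}) extend continuously from $C_0^\infty$ to compactly supported $W^{1,p}$ functions, which legitimizes (\ref{plan-id}); everything else is the routine Young/H\"older bookkeeping sketched above.
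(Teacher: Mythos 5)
Your proposal is correct and is essentially the paper's own argument: the paper likewise tests (\ref{weaksolution}) with $u\zeta_R^p$, obtains exactly your identity (its (\ref{L1})), and handles the cross term by H\"older/Young absorption, with $u\in L^{p^*}(R^n)$ entering through the same exponent bookkeeping. The only (harmless) differences are that you note the cut-off error actually vanishes as $R\to\infty$ (the paper only needs it bounded, via the $L^n$-norm of $\nabla\zeta_R$ in (\ref{LL1})), and you justify admissibility of the non-smooth test function by density, a point the paper passes over silently.
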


\begin{proof}
Choose a smooth function $\zeta(x)$ satisfying
$$
\left \{
   \begin{array}{lll}
&\zeta(x)=1, \quad &for~ |x| \leq 1;\\
&\zeta(x) \in [0,1], \quad &for~ |x| \in [1,2];\\
&\zeta(x)=0, \quad &for~ |x| \geq 2.
   \end{array}
   \right.
$$
Take the test function in (\ref{weaksolution}) as
\begin{equation} \label{cut}
\zeta_R(x)=\zeta(\frac{x}{R}).
\end{equation}
Thus for $D:=B_{3R}$, there holds
\begin{equation} \label{L1}
\int_D A(x,\nabla u) \nabla u \zeta_R^{p}dx+p\int_D u\zeta_R^{p-1}
A(x,\nabla u) \nabla\zeta_R dx =\int_D |x|^au^{q+1}\zeta_R^{p}dx.
\end{equation}

{\it Necessity.} If $\nabla u \in L^p(R^n)$, we claim
$|x|^au^{q+1} \in L^1(R^n)$. In fact, by using (\ref{2}) and the
Young inequality, we get
\begin{equation} \label{Holder}
\begin{array}{ll}
&\quad |\displaystyle\int_D u\zeta_R^{p-1} A(x,\nabla u) \nabla\zeta_R
dx|\\[3mm]
&\leq C(\displaystyle\int_D|\nabla u|^p\zeta_R^pdx)^{1-1/p}
(\int_Du^{p^*}dx)^{1/p^*}(\int_D|\nabla\zeta_R|^ndx)^{1/n}\\[3mm]
&\leq \delta \displaystyle\int_D|\nabla u|^p\zeta_R^pdx+C
(\int_Du^{p^*}dx)^{p/p^*}(\int_D|\nabla\zeta_R|^ndx)^{p/n}
\end{array}
\end{equation}
for any $\delta \in (0,1/3)$.
Here $C>0$ is independent of $R$.
Inserting this into (\ref{L1}) and using (\ref{2}), we obtain
$$
\int_D |x|^au^{q+1}\zeta_R^{p}dx \leq C\int_D|\nabla u|^p\zeta_R^{p}dx
+C(\int_Du^{p^*}dx)^{p/p^*}(\int_D|\nabla\zeta_R|^ndx)^{p/n}.
$$
Letting $R \to \infty$ and noting
\begin{equation} \label{LL1}
\lim_{R \to \infty}\int_D|\nabla\zeta_R|^ndx <\infty,
\end{equation}
we get
$$
\int_{R^n} |x|^au^{q+1}dx <\infty.
$$

{\it Sufficiency.} If $u \in L^{p^*}(R^n)$ and $|x|^au^{q+1} \in
L^1(R^n)$, we claim $\nabla u \in L^p(R^n)$.

Inserting (\ref{Holder}) into (\ref{L1}), and taking $\delta$
sufficiently small, we deduce from (\ref{2}) that
$$
\int_D|\nabla u|^p\zeta_R^{p}dx \leq C\int_D
|x|^au^{q+1}\zeta_R^{p}dx
+C(\int_Du^{p^*}dx)^{p/p^*}(\int_D|\nabla\zeta_R|^ndx)^{p/n}.
$$
Letting $R \to \infty$ and using (\ref{LL1}) we also obtain
$$
\nabla u \in L^p(R^n).
$$
The proof of Theorem \ref{prop6.2} is complete.
\end{proof}

According to Theorem \ref{prop6.2}, we call a positive weak
solution $u$ is the {\it finite energy solution} of (\ref{1}), if
$u \in L^{p^*}(R^n)$ and $|x|^au^{q+1} \in L^1(R^n)$. By the
results in section 4, we also call a positive weak solution $u$ is
the {\it integrable solution} of (\ref{1}), if $u \in
L^{s_0}(R^n)$. When $q$ is equal to the critical exponent
$\frac{p(n+a)}{n-p} -1$, $p^*=s_0$ and hence the finite energy
solution is an integrable solution. On the contrary, we have the
following result (Theorem \ref{prop6.3}). This result, together
with Theorem \ref{prop6.2}, implies that
\begin{equation} \label{tiantian}
u \in L^{s_0}(R^n) \Rightarrow u \in
\mathcal{D}^{1,p}(R^n)
\end{equation}
as long as $u$ is a positive weak solution.

\begin{theorem} \label{prop6.3}
The integrable solution is also a finite energy solution.
\end{theorem}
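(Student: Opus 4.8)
The plan is to show that a positive weak solution $u$ of (\ref{1}) lying in $L^{s_0}(R^n)$ (here $\beta=1$, so $s_0=\frac{n(q-p+1)}{p+a}$) automatically satisfies the two requirements in the definition of a finite energy solution, namely $u\in L^{p^*}(R^n)$ with $p^*=\frac{np}{n-p}$ and $|x|^au^{q+1}\in L^1(R^n)$. The starting point is that, by Theorem \ref{equiva}, such a $u$ solves the Wolff integral equation (\ref{IE}) with $\beta=1$, so every conclusion of Theorems \ref{inter}, \ref{boundth} and \ref{rateth2} (equivalently, Theorem \ref{tiant}) is at our disposal: $u$ is bounded, and it obeys the fast decay estimate $u(x)\le C|x|^{-\frac{n-p}{p-1}}$ for large $|x|$.

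First I would dispose of $u\in L^{p^*}(R^n)$. Since $p>p-1$ we have $p^*=\frac{np}{n-p}>\frac{n(p-1)}{n-p}$, so this membership follows immediately from Theorem \ref{inter} with $\beta=1$. (Alternatively it drops out directly from boundedness on $B_1$ together with the decay estimate on the tail, where the exponent closes because $\frac{p}{p-1}>1$.)

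The substance of the statement is the integrability of $|x|^au^{q+1}$, which I would handle by splitting $R^n=B_1\cup(R^n\setminus B_1)$. On $B_1$ the boundedness of $u$ reduces the question to $\int_{B_1}|x|^a\,dx<\infty$, which holds because $-a<p\beta<n$ forces $a>-n$. On the exterior region I would insert the fast decay bound to obtain $|x|^au^{q+1}(x)\le C|x|^{a-(q+1)\frac{n-p}{p-1}}$, so that convergence of $\int_{|x|>1}|x|^{a-(q+1)\frac{n-p}{p-1}}\,dx$ requires exactly $(q+1)\frac{n-p}{p-1}>n+a$.

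The key --- and only genuine --- point is this last inequality, and it is where the nonexistence threshold (\ref{cond}) enters. Condition (\ref{cond}) with $\beta=1$ reads $q>\frac{(n+a)(p-1)}{n-p}$, i.e. $q\frac{n-p}{p-1}>n+a$; since $(q+1)\frac{n-p}{p-1}>q\frac{n-p}{p-1}$, the required bound follows a fortiori. Hence both pieces of the integral converge, $|x|^au^{q+1}\in L^1(R^n)$, and $u$ is a finite energy solution. I do not anticipate any real obstacle here: once boundedness, the fast decay rate, and (\ref{cond}) have been invoked, the remainder is a one-line exponent check, and combined with Theorem \ref{prop6.2} it yields the implication (\ref{tiantian}).
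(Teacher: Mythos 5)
Your proposal is correct, and its skeleton matches the paper's proof: both invoke Theorem \ref{equiva} to pass to the integral equation (\ref{IE}) with $\beta=1$, both obtain $u\in L^{p^*}(R^n)$ from the integrability interval of Theorem \ref{inter} (since $p^*=\frac{np}{n-p}>\frac{n(p-1)}{n-p}$), and both split the weighted integral at the unit ball, using boundedness of $u$ and $a>-n$ on $B_1$. The genuine difference is the tail estimate. The paper does \emph{not} use the pointwise fast decay rate there: it applies H\"older's inequality, pairing $\|u^{q+1}\|_{k'}$ (finite by Theorem \ref{inter}) against $\bigl(\int_{R^n\setminus B_1}|x|^{ak}dx\bigr)^{1/k}$ with $\frac{1}{k}=\frac{-\epsilon-a}{n}$, so it only needs the ``soft'' inputs of Theorems \ref{inter} and \ref{boundth}. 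You instead invoke Theorem \ref{rateth2}, the fast decay estimate $u(x)\le C|x|^{-\frac{n-p}{p-1}}$, and then close with the one-line exponent check $(q+1)\frac{n-p}{p-1}>n+a$ coming from (\ref{cond}). This is legitimate and non-circular (Theorem \ref{rateth2} is proved in section 3 for $L^{s_0}$ solutions of (\ref{IE}), independently of section 4), and it makes the tail computation more transparent; the cost is that you lean on the deepest theorem of section 3 where the paper gets by with the lighter integrability result, and the paper's route also avoids any fuss about the radius beyond which the decay bound holds (in your version, on the intermediate annulus between $B_1$ and that radius one should again quote boundedness, a trivial but necessary patch since $a\le 0$ there makes $|x|^a\le 1$).
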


\begin{proof}
If $u$ is an integrable solution, Theorem \ref{equiva} shows that
$u$ solves (\ref{IE}). According to Theorems \ref{inter} and
\ref{boundth}, we have $u \in L^s(R^n)$ for all $\frac{1}{s} \in
[0,\frac{n-p}{n(p-1)})$. Therefore, $u \in L^{p^*}(R^n)$. In
addition, by the H\"older inequality, we obtain
$$
\int_{B_1(0)}|x|^au^{q+1}(x)dx  \leq
\|u\|_{\infty}^{q+1}\int_{B_1(0)}|x|^{a}dx <\infty.
$$
and
$$
\int_{R^n\setminus B_1(0)}|x|^au^{q+1}(x)dx
\leq (\int_{R^n\setminus B_1(0)}|x|^{ak}dx)^{1/k}
\|u^{q+1}\|_{k'}<\infty
$$
by taking $\frac{1}{k}=\frac{-\epsilon-a}{n}$ and
$\frac{1}{k'}=1-\frac{1}{k}$ with $\epsilon>0$ sufficiently small.
Thus, $|x|^au^{q+1} \in L^1(R^n)$. Namely, $u$ is a finite energy
solution.
\end{proof}

Combining Theorems \ref{tiant}-\ref{prop6.3}, we complete the proof
of Theorem \ref{th1.1}.

Next, we prove Corollary \ref{th1.3}. The following result is needed.

\begin{theorem} \label{th4.8}
If a classical solution $u$ of (\ref{PDE})
belongs to $\mathcal{D}^{1,p}(R^n)$, then $q$ is the critical exponent $p_*-1$, and
$\|\nabla u\|_p^p=\||x|^au^{p_*}\|_1$. Here $p_*=\frac{p(n+a)}{n-p}$.
\end{theorem}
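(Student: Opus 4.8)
The plan is to combine two integral identities valid for a classical $\mathcal{D}^{1,p}(R^n)$-solution: an energy (Nehari-type) identity and a Pohozaev (dilation) identity. First I would record the energy identity. Since $u\in\mathcal{D}^{1,p}(R^n)$ means $\nabla u\in L^p(R^n)$ and $u\in L^{p^*}(R^n)$, Theorem \ref{prop6.2} already gives $|x|^au^{q+1}\in L^1(R^n)$; moreover, taking the test function $u\zeta_R^p$ in (\ref{weaksolution}), letting the cross term $p\int u\zeta_R^{p-1}|\nabla u|^{p-2}\nabla u\cdot\nabla\zeta_R$ vanish as $R\to\infty$ exactly as in (\ref{Holder})--(\ref{LL1}), and passing to the limit yields
\begin{equation} \label{plan-energy}
\|\nabla u\|_p^p=\int_{R^n}|x|^au^{q+1}dx=:E .
\end{equation}
Because $u>0$ is nonconstant, $E>0$.

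Next I would derive the Pohozaev identity by multiplying (\ref{PDE}) by the dilation field $x\cdot\nabla u$ and integrating over $B_R$. On the left, integrating by parts and using the pointwise identity $\tfrac12|\nabla u|^{p-2}\nabla(|\nabla u|^2)=\tfrac1p\nabla(|\nabla u|^p)$ together with $\operatorname{div}x=n$, the interior contribution collapses to $\frac{p-n}{p}\int_{B_R}|\nabla u|^pdx$. On the right, writing $u^q(x\cdot\nabla u)=\frac{1}{q+1}x\cdot\nabla(u^{q+1})$ and integrating by parts with $\operatorname{div}(|x|^ax)=(n+a)|x|^a$, the interior contribution is $-\frac{n+a}{q+1}\int_{B_R}|x|^au^{q+1}dx$. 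Thus, collecting all surface integrals over $\partial B_R$ into a single term $\mathcal{B}(R)$,
\begin{equation} \label{plan-poho}
\frac{p-n}{p}\int_{B_R}|\nabla u|^pdx
=-\frac{n+a}{q+1}\int_{B_R}|x|^au^{q+1}dx+\mathcal{B}(R),
\end{equation}
where $\mathcal{B}(R)$ gathers $\int_{\partial B_R}|\nabla u|^{p-2}(\nabla u\cdot\nu)(x\cdot\nabla u)dS$, $\frac Rp\int_{\partial B_R}|\nabla u|^pdS$ and $\frac{R^{a+1}}{q+1}\int_{\partial B_R}u^{q+1}dS$.

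The main obstacle is to show $\mathcal{B}(R)\to0$ along a suitable sequence, since $u\in\mathcal{D}^{1,p}$ gives no pointwise decay. Each surface term is controlled by $R\,g(R)+R\,h(R)$, where $g(r)=\int_{\partial B_r}|\nabla u|^pdS$ and $h(r)=r^a\int_{\partial B_r}u^{q+1}dS$; indeed on $\partial B_R$ one has $|x\cdot\nabla u|\le R|\nabla u|$ and $|x|^a=R^a$. By (\ref{plan-energy}), $\int_0^\infty\bigl(g(r)+h(r)\bigr)dr=E+\|\,|x|^au^{q+1}\|_1<\infty$, so $\liminf_{r\to\infty}r\bigl(g(r)+h(r)\bigr)=0$ (otherwise $g+h\ge c/r$ for large $r$ would force the integral to diverge). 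Hence there is a sequence $R_k\to\infty$ with $\mathcal{B}(R_k)\to0$; the classical regularity of $u$ justifies the pointwise integrations by parts used above.

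Finally I would let $R=R_k\to\infty$ in (\ref{plan-poho}) to obtain
\begin{equation} \label{plan-final}
\frac{n-p}{p}\,\|\nabla u\|_p^p=\frac{n+a}{q+1}\int_{R^n}|x|^au^{q+1}dx .
\end{equation}
Substituting (\ref{plan-energy}) and using $E>0$ forces $\frac{n-p}{p}=\frac{n+a}{q+1}$, that is $q+1=\frac{p(n+a)}{n-p}=p_*$, so $q$ is the critical exponent $p_*-1$. With this value (\ref{plan-energy}) reads $\|\nabla u\|_p^p=\||x|^au^{p_*}\|_1$, which is precisely the asserted equality.
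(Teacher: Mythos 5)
Your proposal is correct and follows essentially the same route as the paper: an energy (Nehari-type) identity $\|\nabla u\|_p^p=\||x|^au^{q+1}\|_1$ combined with a Pohozaev identity from the multiplier $x\cdot\nabla u$, with all boundary terms sent to zero along a sequence $R_k\to\infty$ extracted from the integrability of $|\nabla u|^p$ and $|x|^au^{q+1}$ (the paper asserts this sequence exists; you supply the standard $\liminf$ justification). The only cosmetic difference is that you obtain the energy identity via cutoff test functions $u\zeta_R^p$ rather than by integrating over $B_R$ and estimating the sphere term, but the substance is identical.
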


\begin{proof}
Write $B=B_R(0)$. First, multiplying by $u$ and integrating on
$B$, we have
\begin{equation} \label{guoo}
\int_B|x|^au^{q+1}dx=\int_B|\nabla u|^pdx -\int_{\partial
B}|\nabla u|^{p-2}u \partial_{\nu}u ds.
\end{equation}
Here $\nu$ is the unit outward normal vector to $\partial B$. By
virtue of $u \in \mathcal{D}^{1,p}(R^n)$, we can find $R_j \to
\infty$ such that
$$
R_j\int_{\partial B_j}(|\nabla u|^p+u^{p^*})ds \to 0,
$$
where $B_j=B(0,R_j)$. Hence, when $R_j \to \infty$,
$$\begin{array}{ll}
&\quad |\displaystyle\int_{\partial B_j}|\nabla u|^{p-2}u
\partial_{\nu}u ds|\\[3mm]
&\leq (\displaystyle\int_{\partial B_j}|\nabla
u|^pds)^{1-\frac{1}{p}} (\int_{\partial
B_j}u^{p^*}ds)^{\frac{1}{p^*}}|\partial B_j|^{\frac{1}{n}}\\[3mm]
&\leq C(R_j\displaystyle\int_{\partial B_j}|\nabla
u|^pds)^{1-\frac{1}{p}} (R_j\int_{\partial
B_j}u^{p^*}ds)^{\frac{1}{p^*}}
R_j^{\frac{n-1}{n}-1+\frac{1}{p}-\frac{1}{p^*}}\\[3mm]
&\to 0.
\end{array}
$$
Inserting this into (\ref{guoo}) with $R=R_j \to \infty$, we get
\begin{equation} \label{guo}
\|\nabla u\|_p^p=\||x|^au^{q+1}\|_1.
\end{equation}

Multiplying the equation with $(x\cdot \nabla u)$ and integrating
on $B$, we obtain
$$
\int_B|\nabla u|^{p-2}\nabla u\nabla(x\cdot\nabla u)dx
-\int_{\partial B}|\nabla u|^{p-2} \partial_\nu u (x\cdot\nabla
u)ds =\int_B |x|^au^p(x\cdot\nabla u)dx.
$$
Noting
$$
\nabla u\nabla(x\cdot\nabla u)=|\nabla
u|^2+\frac{1}{2}x\cdot\nabla(|\nabla u|^2)
$$
and $x=|x|\nu$, we have
$$\begin{array}{ll}
&\displaystyle\int_B|\nabla u|^p dx+\frac{1}{p}\int_B
x\cdot\nabla(|\nabla u|^p)dx
-R\int_{\partial B}|\nabla u|^{p-2}|\partial_\nu u|^2ds\\[3mm]
&=\displaystyle\frac{1}{q+1}\int_B |x|^a x\cdot\nabla u^{q+1}dx.
\end{array}
$$
Integrating by parts, we get
\begin{equation} \label{deng}
\begin{array}{ll}
&\displaystyle(1-\frac{n}{p})\int_B|\nabla u|^p dx
+\frac{R}{p}\int_{\partial B}|\nabla u|^p ds
-R\int_{\partial B}|\nabla u|^{p-2}|\partial_\nu u|^2 ds\\[3mm]
&=\displaystyle\frac{R^{1+a}}{q+1}\int_{\partial
B}u^{q+1}ds-\frac{n+a}{q+1}\int_B|x|^au^{q+1}dx.
\end{array}
\end{equation}
According to Theorem \ref{prop6.2}, $u \in
\mathcal{D}^{1,p}(R^n)$ implies $|x|^a u^{q+1} \in L^1(R^n)$ and
$\nabla u \in L^p(R^n)$. Therefore, we can find $R_j \to \infty$,
such that
$$
R_j\int_{\partial B_{R_j}}(|x|^au^{q+1}+|\nabla u|^p)ds \to 0.
$$
Let $R=R_j \to \infty$ in (\ref{deng}). By means of the result
above, we deduce that
$$
(1-\frac{n}{p})\int_{R^n}|\nabla u|^p dx
=-\frac{n+a}{q+1}\int_{R^n}|x|^au^{q+1}dx.
$$
Combining with (\ref{guo}), we get $q=p_*-1$. Inserting this
result into (\ref{guo}), we complete the proof of Theorem
\ref{th4.8}.
\end{proof}

\paragraph{Remark 4.2.} Theorem \ref{th4.8} shows that if $q$ is not
equal to the critical exponent $p_*-1$, then there does not exist any classical
solution in $\mathcal{D}^{1,p}(R^n)$. In view of (\ref{tiantian}),
there does not exist any classical solution in $L^{s_0}(R^n)$.

\vskip 5mm

\textbf{Proof of Corollary \ref{th1.3}.}

{\it Step 1.} Item 1 $\Leftrightarrow$ item 2:

First the positive solution $u$ of (\ref{PDE}) is a $\mathcal{A}$-superharmonic
function. In addition, the integrability and the decay property of $u$ in items 1 and 2
ensure $\inf_{R^n}u=0$. Similar to the proof of Theorem \ref{equiva}, we know
that $u$ solves (\ref{IE}) with $\beta=1$. According to the argument in section 3, we
can see the equivalence easily.

{\it Step 2.} Item 1 $\Leftrightarrow$ item 3:

If $u \in L^{s_0}(R^n)$, by Theorem \ref{prop6.3} we get $u \in L^{p^*}(R^n)$
and $|x|^au^{q+1} \in L^1(R^n)$. Thus, we can deduce $\nabla u \in L^p(R^n)$ from
Theorem \ref{prop6.2}. Therefore, $u \in \mathcal{D}^{1,p}(R^n)$.

On the contrary, if $u \in \mathcal{D}^{1,p}(R^n)$, then Theorem \ref{th4.8}
shows $q+1=p_*$. Thus, $s_0=p^*$, and hence $u \in \mathcal{D}^{1,p}(R^n)$ implies
$u \in L^{s_0}(R^n)$.

\subsection{Weak solutions in $\mathcal{D}^{1,p}(R^n)$ instead of $W^{1,p}(R^n)$}

Now, we explain that the weak bounded solution of (\ref{1}) can
not be defined in the space $L^p(R^n)$ if $n \leq p^2$.

If $u \in L^p(R^n)$ is a positive bounded solution of (\ref{1}),
then $\inf_{R^n} u=0$, and hence $u$ solves (\ref{IE}).

When $u$ is not integrable, according to the argument of Theorem
\ref{th5.1}, there exists $R>0$ such that as $|x|>R$,
$$
u(x) \geq c|x|^{-\frac{p+a}{q-p+1}-\epsilon}
$$
with sufficiently small $\epsilon>0$. Thus,
$$
\int_{R^n} u^p(x)dx \geq c\int_{R^n \setminus B_R(0)}
\frac{dx}{|x|^{p(\frac{p+a}{q-p+1}+\epsilon)}}.
$$
In view of $n \leq p^2$, we get
$$
\frac{(n+a)(p-1)}{n-p}
\geq \frac{p(p+a)}{n}+(p-1).
$$
Therefore, (\ref{cond}) implies
$q > \frac{p(p+a)}{n}+p-1.$
Hence, we obtain easily
$$
n \geq p(\frac{p+a}{q-p+1}+\epsilon)
$$
as long as $\epsilon$ is suitably small. Hence, $\|u\|_p=\infty$.
It is impossible.

When $u$ is integrable,
according to Theorems 3.4, there exists $R>0$ such that as
$|x| >R$,
$$
u(x)\geq c|x|^{-\frac{n-p}{p-1}}.
$$
Therefore, by $n \leq p^2$, we have $n \geq p\frac{n-p}{p-1}$,
and hence
$$
\int_{R^n} u^p(x) dx \geq c\int_{R^n \setminus B_R(0)}
\frac{dx}{|x|^{p\frac{n-p}{p-1}}} =\infty.
$$
This also contradicts with $u \in L^p(R^n)$.

\paragraph{Remark 4.3.}
In particular, if $q$ is equal to the critical exponent
$\frac{p(n+a)}{n-p}-1$, then $p^*=s_0$. By the Sobolev inequality,
$u \in W^{1,p}(R^n)$ implies that $u$ is integrable. According to
the argument above, we know that $u \not\in L^p(R^n)$.
Thus, we can only assume that the weak solution $u \in
\mathcal{D}^{1,p}(R^n)$ instead of $W^{1,p}(R^n)$ as long as $n
\leq p^2$.

\paragraph{Acknowledgements.} The research was supported by NSF (No. 11171158) of China,
the Natural Science Foundation of Jiangsu (No. BK2012846) and SRF for ROCS, SEM.

\end{document}